\documentclass[12pt,leqno]{amsart}
\usepackage{amsmath,amssymb,amscd,latexsym,amsthm,mathrsfs,comment}
\usepackage[unicode]{hyperref}
\usepackage{hypbmsec}
\textheight22cm \textwidth15cm \hoffset-1.7cm \voffset-.5cm
\ifx\pdfoutput\undefined\else
\hypersetup{
colorlinks=true,
linkcolor=blue,
citecolor=blue,
urlcolor=blue,
filecolor=blue,
bookmarksnumbered=true,
pdfstartview=FitH,
pdfhighlight=/N
}
\fi
\newtheorem{theorem}{Theorem}
\newtheorem{lemma}{Lemma}

\newtheorem{proposition}{Proposition}
\theoremstyle{remark}
\newtheorem{remark}{Remark}

\numberwithin{equation}{section}

\def\Im{\operatorname{Im}}
\def\Li{\operatorname{Li}}
\def\ord{\operatorname{ord}}

\def\la{\lambda}

\def\al{\alpha}
\def\ep{\varepsilon}

\newcommand{\ZZ}{\mathbb Z}
\renewcommand\pmod[1]{\;(\operatorname{mod}#1)}

\newcommand{\wt}{\widetilde}

\newcommand\qbinom[3]{\begin{bmatrix}#1\\#2\end{bmatrix}_{#3}}
\newcommand\qsbinom[3]{\bigl[\begin{smallmatrix}#1\\#2\end{smallmatrix}\bigr]_{#3}}
\def\fl#1{\left\lfloor#1\right\rfloor}
\def\cl#1{\left\lceil#1\right\rceil}
\newcommand{\cD}{\mathcal D}
\newcommand{\tcD}{\widetilde{\mathcal D}}
\newcommand{\cN}{\mathcal N}

\def\Ddots{\vbox to12pt{\vss\hbox{\kern1pt\lower1pt\hbox to4pt{$\cdot$\hss}%
 \lower-2pt\hbox to4pt{\hss$\cdot$\hss}%
 \lower-5pt\hbox to4pt{\hss$\cdot$}}}}

\begin{document}

\hypersetup{pdfauthor={Christian Krattenthaler, Igor Rochev, Keijo V\"a\"an\"anen, and Wadim Zudilin},%
pdftitle={On the non-quadraticity of values of the \$q\$-exponential function and
related \$q\$-series}}

\title[On the non-quadraticity of values of $q$-series]%
{On the non-quadraticity of values\\of the $q$-exponential function and
related $q$-series}

\dedicatory{To our great Peter Bundschuh on his 70th birthday}

\author[C.~Krattenthaler]{Christian Krattenthaler}
\address{Fakult\"at f\"ur Mathematik, Universit\"at Wien, Nordbergstrasse 15,
A-1090 Vienna, AUSTRIA\newline\hbox to\parindent{\hss}%
\textit{WWW-address}: \href{http://www.mat.univie.ac.at/~kratt}{\tt http://www.mat.univie.ac.at/\~{}kratt}}

\author[I.~Rochev]{Igor Rochev}
\address{Department of Mechanics and Mathematics, Moscow Lomonosov
State University, Vorobiovy Gory, GSP-1, 119991 Moscow, RUSSIA}
\email{justrip@rambler.ru}

\author[K.~V\"a\"an\"anen]{Keijo V\"a\"an\"anen}
\address{Department of Mathematical Sciences, University of Oulu, P.\,O.~Box 3000,
90014 Oulu, FINLAND}
\email{kvaanane@sun3.oulu.fi}

\author[W.~Zudilin]{Wadim~Zudilin}
\address{Max-Planck-Institut f\"ur Mathematik, Vivatsgasse 7,
D-53111 Bonn, GERMANY%
\newline\hbox to\parindent{\hss}%
\textit{WWW-address}: \href{http://wain.mi.ras.ru/}{\tt http://wain.mi.ras.ru/}}

\date{August 12, 2008}
\subjclass[2000]{Primary 11J72, 11J82; Secondary 11C20, 15A15, 33D15}
\keywords{Irrationality, non-quadraticity, $q$-exponential function, Hankel determinant, cyclotomic polynomial}

\thanks{The work of the first author was partially supported by the Austrian
Science Foundation FWF, grant S9607-N13,
in the framework of the National Research Network
``Analytic Combinatorics and Probabilistic Number Theory.''
The work of the second author was supported by
the Russian Foundation for Basic Research, grant no.~06-01-00518.
The work of the fourth author was supported by a fellowship
of the Max Planck Institute for Mathematics (Bonn). Part of this work
was done during the first and fourth authors' stay at the
Erwin Schr\"odinger Institute for Physics and Mathematics,
Vienna, during the programme ``Combinatorics and Statistical Physics''
in Spring~2008.}

\begin{abstract}
We investigate arithmetic properties of values of the entire function
$$
F(z)=F_q(z;\lambda)=\sum_{n=0}^\infty\frac{z^n}{\prod_{j=1}^n(q^j-\lambda)},
\qquad |q|>1, \quad \lambda\notin q^{\mathbb Z_{>0}},
$$
that includes as special cases the Tschakaloff function ($\lambda=0$)
and the $q$-exp\-onential function ($\lambda=1$). In particular,
we prove the non-quadraticity of the numbers $F_q(\alpha;\lambda)$ for
integral~$q$, rational $\lambda$ and $\alpha\notin-\lambda q^{\mathbb Z_{>0}}$, $\alpha\ne0$.
\end{abstract}

\maketitle

\section{Introduction and main results}
\label{s1}

Consider the $q$-exponential function
\begin{equation}
\label{e01}
E_q(z)=\sum_{n=0}^\infty\frac{z^n}{\prod_{j=1}^n(q^j-1)},
\end{equation}
which is an entire function in the complex $z$-plane for any $q\in\mathbb C$, $|q|>1$.
It is not difficult to adopt the classical proof of the irrationality of
$$
e=\sum_{n=0}^\infty\frac1{n!}
$$
to the case of the number $E_q(1)$ for an integer $q>1$.
Indeed, assuming, by contradiction, that $E_q(1)=r/s$~for certain positive integers $r$ and~$s$,
we see that the real number
\begin{multline}
\label{e02}
r\prod_{j=1}^k(q^j-1)-s\sum_{n=0}^k\prod_{j=n+1}^k(q^j-1)
\\
=s\prod_{j=1}^k(q^j-1)\cdot\biggl(E_q(1)-\sum_{n=0}^k\frac1{\prod_{j=1}^n(q^j-1)}\biggr)
=s\sum_{n=k+1}^\infty\frac1{\prod_{j=k+1}^n(q^j-1)}
\end{multline}
is integral (according to the left-hand side representation)  and
positive (because of the right-hand side representation),
hence it is at least~$1$, for any integer $k\ge1$.
On the other hand,
\begin{align*}
s\sum_{n=k+1}^\infty\frac1{\prod_{j=k+1}^n(q^j-1)}
&<\frac s{q^{k+1}-1}\sum_{n=0}^\infty\frac1{2^n}
\\
&=\frac{2s}{q^{k+1}-1}\to0
\quad\text{as}\; k\to\infty,
\end{align*}
leading to a contradiction.

The above proof is based on the simple observation that truncations
of the series defining $E_q(1)$ (see the intermediate term in~\eqref{e02})
provide rational approximations that are good enough
to conclude the irrationality of the number in question.
This argument has been generalized in various ways. For example, this
truncation idea lies at the heart of Mahler's method~\cite{Ma}
of proving the algebraic independence of values of the series
satisfying certain, quite restrictive, functional equations.
In the same paper~\cite{Ma}, K.~Mahler posed a transcendence
problem for values of the series that form a solution to more general
functional equations. This problem remains unsolved until today,
with the sole exception of values of quasi-modular functions~\cite{Ne}.
In particular, only irrationality and linear independence results
are known so far for values of the $q$-exponential function.

Recently, J.-P.~B\'ezivin~\cite{Be} proposed a new approach for
the study of arithmetic properties of values of certain $q$-series.
Among other things, he managed to prove the non-quadraticity
of values of the so-called Tschakaloff function
\begin{equation}
\label{e03}
T_q(z)=\sum_{n=0}^\infty q^{-n(n+1)/2}z^n
\end{equation}
at non-zero rational points if $q=\rho/\sigma\in\mathbb{Q}$ satisfies
$\gamma:=\log|\rho|/\log|\sigma|>14$. Furthermore, he proved the irrationality
of these values if $\gamma>28/15=1.866\dots$, and thus extended considerably
the possible values of~$q$ in the earlier irrationality results~\cite{Tsch},
\cite{Bu1}--\cite{Bu4}, where $\gamma>(3+\sqrt{5})/2=2.618\dots$\,.
It is interesting that B\'ezivin's approach was also an implicit
generalization of the truncation idea. The method of~\cite{Be}
was applied to the $q$-exponential function by R.~Choulet~\cite{Ch},
who could not prove the non-quadraticity of its values,
but improved the bound $\gamma>7/3$ of the earlier irrationality
result of Bundschuh \cite{Bu1} for $E_q(z)$ to $\gamma>2$.
He also improved
the above bound $\gamma>14$ in B\'ezivin's non-quadraticity result
for $T_q(z)$ to $\gamma>14/3$ and
the bound $\gamma>28/15$ in the irrationality result to $\gamma>28/17$.

The aim of this article is two-fold. First of all, we further generalize
B\'ezivin's method~\cite{Be} to prove non-quadraticity results for values of the $q$-series
\begin{equation}
\label{e04}
F(z)=F_q(z;\lambda)=\sum_{n=0}^\infty\frac{z^n}{\prod_{j=1}^n(q^j-\lambda)},
\qquad |q|>1,
\end{equation}
that include the Tschakaloff function and the $q$-exponential function as special
cases ($\lambda=0$ and $\lambda=1$, respectively),
and we further extend the values of~$q$ giving irrational values for $F_q(z;\lambda)$.
Secondly, in our proofs we use a more direct method than the $p$-adic approach
used in~\cite{Be} and~\cite{Ch}.
This allows us to perceive the additional
arithmetic information which can hardly be seen from the $p$-adic considerations.

We state our results in the following two theorems.

\begin{theorem}
\label{t1}
Let $q=\rho/\sigma\in\mathbb Q$ with $|q|>1$, and let
$\alpha$ and $\lambda$ satisfy $\alpha\ne0$, $\lambda\notin q^{\mathbb Z_{>0}}$
and $\alpha\notin-\lambda q^{\mathbb Z_{>0}}$.
If
\begin{equation*}
\gamma=\frac{\log|\rho|}{\log|\sigma|}>\begin{cases}
\dfrac {126 {{\pi }^2}} {47 {{\pi }^2}-72 {\sqrt{3}} \Im \Li_2(e^{2\pi
\sqrt{-1}/3}) }=3.27694460\dots &\text{if $\lambda=0$}, \\[2mm]
\dfrac {27 {{\pi }^2}} {5 {{\pi }^2}-18 {\sqrt{3}} \Im \Li_2(e^{2\pi \sqrt{-1}/3})}
=9.43194241\dots &\text{if $\lambda\ne0$},
\end{cases}
\end{equation*}
then $\alpha$, $\lambda$, and $\mu=F_q(\alpha;\lambda)$ in~\eqref{e04}
cannot all belong to a quadratic extension of~$\mathbb Q$. In particular,
if $\alpha$ and~$\lambda$ are rational then $F_q(\alpha;\lambda)$ is neither
rational nor quadratic.
\end{theorem}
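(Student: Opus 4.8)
The plan is to build good algebraic approximations to $1,\mu,\mu^2$ out of the first-order $q$-difference equation for $F$. Writing $c_n=1/\prod_{j=1}^n(q^j-\lambda)$, the recurrence $(q^n-\lambda)c_n=c_{n-1}$ gives
\begin{equation*}
F(qz)=(z+\lambda)F(z)+(1-\lambda),
\end{equation*}
and squaring turns $(1,F,F^2)$ into a lower-triangular first-order system with diagonal $1,\,z+\lambda,\,(z+\lambda)^2$. First I would use this to construct a Hermite--Pad\'e (type I) form: polynomials $P_n,Q_n,R_n$ of degree about $n$, with coefficients in $\mathbb Q(\lambda)$, for which the remainder
\begin{equation*}
S_n(z)=P_n(z)+Q_n(z)F(z)+R_n(z)F(z)^2
\end{equation*}
vanishes at the origin to order roughly $3n$. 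The functional equation reduces the determination of $P_n,Q_n,R_n$ to a solvable linear system, and --- as the keyword list signals --- the approximants, the remainder, and the non-degeneracy of the whole construction are most transparently encoded by Hankel determinants in the Taylor coefficients of $F$ and $F^2$.

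Next I would extract three estimates, each of order $\exp(c\,n^2)$. First, the archimedean smallness of the remainder: because $|q|>1$ makes $|c_n|$ decay like $|q|^{-n(n+1)/2}$, the high vanishing order yields $|S_n(\alpha)|\le|q|^{-(c_S+o(1))\,n^2}$ for fixed $\alpha$. Second, the coefficients $A_n=P_n(\alpha)$, $B_n=Q_n(\alpha)$, $C_n=R_n(\alpha)$, together with their Galois conjugates, are bounded by $|\rho|^{(c_H+o(1))\,n^2}$. Third, the denominator $D_n\in\NN$ needed to clear $A_n,B_n,C_n$ (and the powers of $\mu$) into algebraic integers: the coefficients of $F$ carry the $q$-factorial denominators $\prod_{j=1}^n(\rho^j-\lambda\sigma^j)$, essentially powers of $\rho$, and the exact surviving exponent is dictated by factoring these into cyclotomic values and counting their multiplicities inside the Hankel determinants. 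It is this last, purely arithmetic, count that produces the non-elementary constant: the relevant second-order sums have a period-three extremal structure whose continuous limit is the Clausen/dilogarithm value $\Im\Li_2(e^{2\pi\sqrt{-1}/3})$, so that $\log D_n=(c_D+o(1))\,n^2\log|\rho|$ with $c_D$ expressed through it.

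The conclusion is then a Liouville-type argument in the quadratic field. Assume, for contradiction, that $\alpha$, $\lambda$ and $\mu$ all lie in a quadratic extension $K$ of $\mathbb Q$. Then
\begin{equation*}
\Lambda_n:=D_n\,S_n(\alpha)=D_nA_n+D_nB_n\,\mu+D_nC_n\,\mu^2
\end{equation*}
is an algebraic integer of $K$; securing $\Lambda_n\ne0$ for infinitely many $n$ is exactly where the Hankel non-degeneracy is used, since two or three consecutive approximants have a non-zero determinant and so cannot all vanish at the single point $\alpha$. For such $n$, the Galois conjugate of a non-zero algebraic integer being again non-zero, $|\Norm_{K/\mathbb Q}(\Lambda_n)|\ge1$, whence
\begin{equation*}
1\le D_n^2\,|S_n(\alpha)|\cdot\bigl(|\ol{A_n}|+|\ol{B_n}|\,|\ol\mu|+|\ol{C_n}|\,|\ol\mu|^2\bigr).
\end{equation*}
Substituting the three estimates, the right-hand side is $\exp(\kappa(\gamma)\,n^2+o(n^2))$ for an explicit linear function $\kappa$ of $\log|\rho|$ and $\log|\sigma|$; the still-free parameters (the degrees of $P_n,Q_n,R_n$ and the vanishing order) are then optimized to make $\kappa(\gamma)<0$, which happens exactly when $\gamma$ exceeds the stated threshold --- the two cases $\lambda=0$ and $\lambda\ne0$ differing because the denominator count is lighter when the shift $\lambda$ is absent. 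A negative $\kappa$ sends the right-hand side to $0$, contradicting the lower bound $1$, and this proves the theorem.

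The main obstacle is arithmetic rather than analytic, and it has two parts. First, the non-degeneracy of the Hermite--Pad\'e system: one must prove that the governing Hankel determinants do not vanish, both so that $S_n$ exists with the claimed vanishing order and so that the $\Lambda_n$ do not all vanish at $\alpha$ --- without this the lower bound $|\Norm_{K/\mathbb Q}(\Lambda_n)|\ge1$ is simply unavailable. Second, and more delicate, is the sharp evaluation of the denominator exponent $c_D$: computing the exact cyclotomic content of the Hankel determinants, whose fine second-order asymptotics are what force the appearance of $\Im\Li_2(e^{2\pi\sqrt{-1}/3})$ and hence pin down the numerical thresholds. By comparison, the archimedean estimates and the final optimization are routine once these two points are in place.
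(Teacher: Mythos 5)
Your outline follows a genuinely different route from the paper --- a type~I Hermite--Pad\'e construction for $1,F,F^2$ with a Liouville-type norm bound on the linear form $A_n+B_n\mu+C_n\mu^2$ --- whereas the paper never builds such approximants: it forms the Hankel determinant $V_n=\det_{0\le i,j\le n-1}(v_{i+j})$ of the \emph{tails} $v_n=a_n(q)^{-1}\sum_{k>n}a_k(q)\alpha^k$ of the series for $\mu$, observes that $V_n$ is a polynomial in $q,\alpha,\lambda,\mu$ with integer coefficients, and applies the norm inequality to $V_n$ itself after stripping off an explicit cyclotomic factor $\Delta_n(q)$; quadraticity enters only through the number $d=2$ of conjugates. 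The problem with your proposal is that it defers exactly the two steps that constitute the proof, and neither is available in your framework. First, the non-degeneracy: normality of the Hermite--Pad\'e table for $1,F_q,F_q^2$ is not known for this class of $q$-series and you give no way to prove it, so the lower bound $|\Norm_{K/\mathbb Q}(\Lambda_n)|\ge1$ is never secured. The paper sidesteps this entirely: the generating function $G(z)=\sum_n v_n(x)z^n$ of the tails satisfies the first-order $q$-difference equation $(1+\lambda z)G(z)-qzG(qz)=x-1/(1-\alpha z)$, a pole analysis (using $\alpha\notin-\lambda q^{\mathbb Z_{>0}}$) shows $G$ cannot be rational, and Kronecker's criterion then yields $V_n\ne0$ for infinitely many $n$ (Lemma~\ref{lem:Kron}). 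That argument is specific to the tail sequence and has no analogue for your approximants.

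Second, the constant $\Im\Li_2(e^{2\pi\sqrt{-1}/3})$ is asserted, not derived. In the paper it comes from Proposition~\ref{l0}, the divisibility $\Phi_l(q)^{e_l(n)}\mid V_n$ with $e_l(n)=\sum_{i=0}^{n-1}\bigl(\lfloor(i+l)/(3l)\rfloor+\lfloor i/(3l)\rfloor\bigr)$, proved by a double induction involving $l$-quasi-polynomials and the operators $\mathcal I-B\mathcal N^l$ and $\mathcal I-\alpha^l\mathcal N^l$; summing $\varphi(l)e_l(n)$ over $l$ via Mertens' formula (Lemma~\ref{eq:sumel}) produces $\sum_{m\ge1}(3m-1)^{-2}$ and hence the dilogarithm value. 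You would need to establish the analogous sharp cyclotomic content of the denominators of your $P_n,Q_n,R_n$, and there is no reason their exponents would obey the same $\lfloor(i+l)/(3l)\rfloor$ law. Finally, the stated thresholds are the exact output of the balance $\gamma>(A+C)/(A+C-2(C-B))$ among three \emph{cubic} growth rates ($A$ from $|V_n|\le|q|^{-An^3+o(n^3)}$, $C=2/3$ from the conjugate bound, $B$ from $\deg_q\Delta_n$); your estimates live at scale $n^2$ with unspecified constants, so even granting both missing ingredients you have not verified that your optimization reproduces the bounds in the theorem.
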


In the case that $\lambda\ne0$, the above result is entirely new,
while its special case $\lambda=0$
improves Choulet's bound $\gamma>14/3$ considerably.

The next theorem gives
improvements for the above mentioned lower bounds of~$\gamma$ in the irrationality results.

\begin{theorem}
\label{t2}
Under the hypotheses of Theorem~\textup{\ref{t1}}, if
\begin{equation*}
\gamma=\frac{\log|\rho|}{\log|\sigma|}>\begin{cases}
\dfrac {252 {{\pi }^2}} {173 {{\pi }^2}-72 {\sqrt{3}}
\Im \Li_2(e^{2\pi \sqrt{-1}/3}) }
=1.53237645\dots &\text{if $\lambda=0$}, \\[2mm]
\dfrac {27 {{\pi }^2}} {16 {{\pi }^2}-9 {\sqrt{3}}
\Im \Li_2(e^{2\pi \sqrt{-1}/3}) }
=1.80828115\dots &\text{if $\lambda\ne0$},
\end{cases}
\end{equation*}
then $\alpha$, $\lambda$, and $\mu=F_q(\alpha;\lambda)$ in~\eqref{e04}
cannot all be rational.
\end{theorem}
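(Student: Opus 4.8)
The plan is to argue by contradiction: assuming that $\alpha$, $\lambda$, and $\mu=F_q(\alpha;\lambda)$ are all rational, I would manufacture, for each large integer~$N$, a \emph{nonzero} rational number of the shape $\Lambda_N=A_N\mu-B_N$ that is simultaneously so small and has so controlled a denominator that, after clearing denominators, one obtains a nonzero rational integer tending to~$0$ in absolute value, which is absurd. The approximants $A_N,B_N$ are produced by a Padé-type (Hankel) construction attached to~$F$. As background one records the $q$-difference equation
\begin{equation*}
F(qz)=(z+\lambda)F(z)+1-\lambda,
\end{equation*}
which shows that every value $F(q^k\alpha)$ lies in $\mathbb Q(\alpha,\lambda,\mu)$ and is an explicit affine function of~$\mu$ with coefficients in $\mathbb Q(\alpha,\lambda)$; in particular rationality of $\alpha,\lambda,\mu$ propagates along the whole orbit. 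I then build polynomials $P_N,Q_N$ of suitably balanced degrees so that the remainder $R_N(z)=Q_N(z)F(z)-P_N(z)$ vanishes to high order at $z=0$. Evaluating at $z=\alpha$ gives $R_N(\alpha)=Q_N(\alpha)\mu-P_N(\alpha)$, which is exactly the desired $\mathbb Q$-linear form $A_N\mu-B_N$ in $1$ and~$\mu$.

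The argument then splits into the two classical estimates. For the \emph{analytic} upper bound, the high-order vanishing of $R_N$ together with the super-geometric decay of the coefficients $c_n=1/\prod_{j=1}^n(q^j-\lambda)\sim|q|^{-n(n+1)/2}$ forces $|R_N(\alpha)|$ to be of size roughly $|q|^{-cN^2}$ for an explicit constant $c>0$. For the \emph{arithmetic} bound one must control the common denominator $\mathrm{den}_N$ of $A_N=Q_N(\alpha)$ and $B_N=P_N(\alpha)$. Writing $q^j-\lambda=(\rho^j-\lambda\sigma^j)/\sigma^j$, the naive denominator is governed by the powers $\sigma^{n(n+1)/2}$ and by the numerators $\rho^j-\lambda\sigma^j$; the crucial saving comes from extracting the common divisors of these via the homogenised cyclotomic factorisation of $\rho^j-\sigma^j$, which cancels a positive proportion of the denominator. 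The leading (uncancelled) part contributes a $\zeta(2)=\pi^2/6$–type sum, explaining the $\pi^2$ in the threshold, while the cyclotomic cancellation contributes the correction term; summed over arithmetic progressions, the extremal configuration appears to be governed by residues modulo~$3$, so that the resulting fractional-part sum evaluates to the Clausen-type constant $\Im\Li_2(e^{2\pi\sqrt{-1}/3})$. The distinction between the exponents of $|\sigma|$ and $|\rho|$ is what makes $\gamma=\log|\rho|/\log|\sigma|$ the governing quantity, and optimising the two free degree parameters of the construction yields the precise rational coefficients in the stated bound.

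Combining the estimates, $\mathrm{den}_N\cdot R_N(\alpha)$ is (up to the fixed denominator of $\mu$) a rational integer of size at most $|\sigma|^{O(N^2)}\,|q|^{-cN^2}$, and the hypothesis on~$\gamma$ is exactly the condition under which this product tends to~$0$. To reach the contradiction one must know that $R_N(\alpha)\ne0$. This \emph{non-vanishing} is the heart of the matter: it reduces to showing that the relevant Hankel determinant formed from the coefficients~$c_n$ (equivalently, from the orbit values $F(q^k\alpha)$) is nonzero, which I would settle by an explicit determinant evaluation exploiting the $q$-product structure of the $c_n$, together with the check that the chosen point $\alpha\notin-\lambda q^{\mathbb Z_{>0}}$, $\alpha\ne0$ does not accidentally annihilate the form.

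The main obstacle, and the place where the sharp constant is won or lost, is the arithmetic denominator estimate: one needs the \emph{optimal} amount of cyclotomic cancellation in $\prod_j(\rho^j-\lambda\sigma^j)$, made quantitative and uniform in~$N$, since any crude bound would only reproduce the weaker thresholds of Choulet. Compared with Theorem~\ref{t1}, the present statement targets approximation of $\mu$ by rationals rather than by quadratic numbers, so the auxiliary system is one dimension lower; this relaxation of the degree balance is precisely what permits the smaller admissible values of~$\gamma$ recorded here.
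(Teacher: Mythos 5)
Your proposal follows the classical Hermite--Pad\'e route (construct $Q_NF-P_N$ vanishing to high order at $z=0$, evaluate at $\alpha$, balance an analytic bound against a denominator bound). This is not the paper's argument, and as outlined it has gaps that are fatal for the stated thresholds. The paper never produces a single small linear form $A_N\mu-B_N$: it forms the Hankel determinant $V_n=\det_{0\le i,j\le n-1}(v_{i+j})$ of the normalized tails $v_n$ of the series (a polynomial of degree $n$ in $\mu$, of size $|q|^{-An^3+o(n^3)}$), and the decisive ingredient is Proposition~\ref{l0}: $V_n$, viewed as a polynomial in $q$, is divisible by $\Phi_l(q)^{e_l(n)}$ with $e_l(n)=\sum_{i}\bigl(\lfloor (i+l)/(3l)\rfloor+\lfloor i/(3l)\rfloor\bigr)$, proved by a double induction built on row and column operations with the operators $\mathcal I-B\mathcal N^l$ and $\mathcal I-\alpha^l\mathcal N^l$. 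Dividing out the explicit factor $\Delta_n(q)=q^{e_0(n)}\prod_l\Phi_l(q)^{e_l(n)}$ simultaneously shrinks the remaining cofactor and its denominator by $|q|^{Bn^3}$, and it is this $B$ (evaluated via Mertens' theorem in Lemma~\ref{eq:sumel}) that produces the $\Im\Li_2(e^{2\pi\sqrt{-1}/3})$ term and the numerical thresholds. Nothing in your construction produces these exponents; you assert that ``optimising the two free degree parameters'' yields the stated constants, but the classical Pad\'e optimisation for these very functions is exactly what gives the much weaker bounds $\gamma>7/3$ and $\gamma>2+\sqrt2$ quoted in the paper right after Theorem~\ref{t2}.

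Two further concrete problems. First, your denominator-saving mechanism --- common cyclotomic divisors of the numbers $\rho^j-\lambda\sigma^j$ --- does not exist for general rational $\lambda$: writing $\lambda=a/b$, the integers $b\rho^j-a\sigma^j$ admit no cyclotomic factorisation, which is why the paper instead locates the cyclotomic polynomials inside the determinant $V_n$ regarded as a polynomial in the \emph{variable} $q$ (an argument valid for an arbitrary polynomial $b(\,\cdot\,)$). Second, the non-vanishing of your linear form at $z=\alpha$ is left to ``an explicit determinant evaluation exploiting the $q$-product structure''; no such evaluation is available for $\lambda\ne0$, and the paper handles non-vanishing by an entirely different device: the generating function $\sum_n v_n(x)z^n$ satisfies the $q$-difference equation \eqref{e53} and is shown not to be a rational function unless $\alpha\in-\lambda q^{\mathbb Z_{>0}}$, whence Kronecker's rationality criterion guarantees $V_n\ne0$ for infinitely many $n$ (Lemma~\ref{lem:Kron}). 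Without a replacement for both of these steps your outline cannot reach the claimed constants.
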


Since the function $F_q(z;\lambda)$ satisfies the functional equation
$$F(qz)=(z+\lambda)F(z)+(1-\lambda),$$
the irrationality of the values of
$F_q(z;\lambda)$ at non-zero rational points $\notin -\lambda q^{\mathbb{Z}_{>0}}$
follows from~\cite{Sti} if a rational
number $\lambda \notin q^{\mathbb{Z}_{>0}}$
and a rational number $q$ satisfies $\gamma>7/3$ for
$\lambda \in q^{\mathbb{Z}_{\le 0}}$ and $\gamma>2+\sqrt{2}$ otherwise.

\medskip
Sections~\ref{s2}--\ref{s4} prepare for the proofs of these theorems.
In Section~\ref{s2}, we review B\'ezivin's construction, applied to
our more general context.
It involves in particular the introduction of a sequence
$(v_n)_{n\in\ZZ}$, the Hankel determinant of which plays a fundamental
role in the sequel. This determinant is a
polynomial in $q$ and two other variables.
Propositions~\ref{IRP1} and \ref{IRP2} in Section~\ref{s3} address the
power of $q$ which appears in this Hankel
determinant as a polynomial factor, while an asymptotic upper bound
for the Hankel determinant is found in Proposition~\ref{IRP3}.
Finally, Proposition~\ref{l0} in Section~\ref{s4} detects large
amounts of cyclotomic factors (in $q$) in the Hankel determinant.
All these ingredients are put together for the proofs of
Theorems~\ref{t1} and~\ref{t2} in Section~\ref{s5}.

\section{Review of B\'ezivin's construction}
\label{s2}

The general idea of B\'ezivin's method \cite{Be} refers to a function
\begin{equation}
\label{e05}
F(z)=\sum_{n=0}^\infty a_n(q)z^n,
\qquad a_0(q)=1, \quad
\frac{a_{n-1}(q)}{a_n(q)}=b_n(q)=b(q^n)
\;\;\text{for}\; n=1,2,\dots,
\end{equation}
where $b(\,\cdot\,)$~is a polynomial (in general, a rational function)
over a number field. Let $\alpha\in\mathbb C$.
One takes the coefficients $v_n$ appearing in
\begin{equation}
\label{e06}
\frac{F(\alpha z)-F(\alpha)}{z-1}
=\sum_{n=0}^\infty v_na_n(q)z^n
\end{equation}
and forms the Hankel determinant
\begin{equation}
\label{e07}
V_n=\det_{0\le i,j\le n-1}(v_{i+j}).
\end{equation}
Then one has to provide an analytic upper bound for $|V_n|$
and, under the assumption that both $\alpha$ and $\mu=F(\alpha)$
belong to a certain algebraic number field~$K$, an arithmetic lower
bound, in order to find them contradictory; this shows that the
assumption on~$\alpha$ and $\mu$ cannot be true.

Before going into the details of the construction, note that
relation~\eqref{e06} may be written in the form
$$
\sum_{n=0}^\infty a_n(q)\alpha^nz^n-\mu
=(z-1)\sum_{n=0}^\infty v_na_n(q)z^n
=-v_0+\sum_{n=1}^\infty\bigl(v_{n-1}a_{n-1}(q)-v_na_n(q)\bigr)z^n,
$$
yielding
\begin{equation}
\label{e08}
v_0=\mu-1 \qquad\text{and}\qquad
v_n=v_{n-1}b_n(q)-\alpha^n \quad\text{for}\; n=1,2,\dots.
\end{equation}
Hence, by induction, we easily arrive at the formula
\begin{equation}
\label{e09}
v_n=\mu\prod_{j=1}^nb_j(q)-\sum_{k=0}^n\alpha^k\prod_{j=k+1}^nb_j(q).
\end{equation}

\begin{remark} \label{rem:1}
Since we shall make use of it later on,
we point out that Formula~\eqref{e09} also holds for {\it negative}
$n$ (that is, if we extend the sequence $(v_n)$ to all integers $n$
by letting the recurrence \eqref{e08} hold for all integers $n$)
under the conventions
$$\sum _{k=m} ^{n-1}\operatorname{Expr}(k)=\begin{cases} \hphantom{-}
\sum _{k=m} ^{n-1} \operatorname{Expr}(k)&n>m,\\
\hphantom{-}0&n=m\\
-\sum _{k=n} ^{m-1}\operatorname{Expr}(k)&n<m,\end{cases}
$$
and
$$\prod _{k=m} ^{n-1}\operatorname{Expr}(k)=\begin{cases} \hphantom{-}
\prod _{k=m} ^{n-1} \operatorname{Expr}(k)&n>m,\\
\hphantom{-}1&n=m\\
1\Big/\prod _{k=n} ^{m-1}\operatorname{Expr}(k)&n<m.\end{cases}
$$
\end{remark}

Assuming that $b(\,\cdot\,)$ in~\eqref{e05} is a polynomial of degree~$s$,
Formula~\eqref{e09} shows that, for positive integers $n$,
$V_n$~is a polynomial in $\mu$, $\alpha$, and $q$
of degree at most $n$ in~$\mu$, $n(n-1)$ in~$\alpha$, and
\begin{equation} \label{e10}
s\sum_{i=0}^{n-1}\frac{2i(2i+1)}2=\frac{sn(n-1)(4n+1)}6
\end{equation}
in~$q$ (cf.\ \cite[Lemma~2.4]{Be}).
Formula~\eqref{e09} may also be written as
\begin{align*}
v_n
&=\prod_{j=1}^nb_j(q)\cdot\biggl(\mu-\sum_{k=0}^n\alpha^k\prod_{j=1}^k\frac1{b_j(q)}\biggr)
\\
&=a_n(q)^{-1}\cdot\biggl(\sum_{k=0}^\infty a_k(q)\alpha^k-\sum_{k=0}^na_k(q)\alpha^k\biggr)
\\
&=a_n(q)^{-1}\cdot\sum_{k=n+1}^\infty a_k(q)\alpha^k
=\sum_{k=n+1}^\infty\frac{\alpha^k}{\prod_{j=n+1}^kb_j(q)},
\end{align*}
showing that the $v_n$'s are nothing else but \emph{tails} of the series
$\mu=\sum_{k=0}^\infty a_k(q)\alpha^k$ (normalized by the factors $a_n(q)^{-1}$;
cf.\ the intermediate part of~\eqref{e02}).
This fact somehow explains why the determinant in~\eqref{e07} is
expected to be `small'.

\smallskip
Our basic example~\eqref{e04} corresponds to the choice
$b_n(q)=q^n-\lambda$, for
a fixed algebraic number~$\lambda$. In this case, we have
$a_n(q)=\prod_{k=1}^n(q^k-\lambda)^{-1}$, and
the Hankel determinant $V_n$~is also a polynomial
in~$\lambda$ of degree at most $n(n-1)$. The choice $b_n(q)=q^n$ (that is,
$\lambda=0$), yielding the Tschakaloff function~\eqref{e03},
was the illustrative example of the method in~\cite{Be}, while
the choice $b_n(q)=q^n-1$ (when $\lambda=1$) results in the $q$-exponential
function~\eqref{e01}. In~\cite{Ch}, Choulet treated both the
Tschakaloff and $q$-exponential cases.

We replace the argument of B\'ezivin and Choulet by a more direct approach
(see Sections~\ref{s3}--\ref{s5} below); in particular, we do not require
the non-trivial $p$-adic techniques used in~\cite{Be} and~\cite{Ch},
thus making our proofs more `concrete' and elementary. An essential gain,
which allows us to succeed in proving the non-quadraticity of the values of~\eqref{e04},
is due to extraction of cyclotomic factors in the factorization of
the Hankel determinant~\eqref{e07}; this is explained in Section~\ref{s4}.

\section{Determinant calculus}
\label{s3}

Define the ($q$-)\emph{order} of a Laurent series $f(q)=\sum_{n\in\mathbb Z}c_nq^n$ as
$$
\ord f(q)=\ord_qf(q)=\min\{n:c_n\ne0\}.
$$
The $q$-\emph{binomial coefficient} $\qsbinom mkq$ is defined by
\begin{equation*}
\qbinom mkq=\begin{cases}
\dfrac{(1-q^m)(1-q^{m-1})\cdots(1-q^{m-k+1})}
{(1-q^k)(1-q^{k-1})\cdots(1-q)} &\text{if $k\ge0$}, \\
0 &\text{if $k<0$}.
\end{cases}
\end{equation*}
Moreover, we adopt the usual notation for shifted $q$-factorials,
given by $(a;q)_m:=(1-a)(1-aq)\cdots(1-aq^{m-1})$ if $m>0$, and
$(a;q)_0:=1$.

Specializing $b_j(q)=q^j-\la$ in \eqref{e08},
where $\la\ne q^{\mathbb Z_{>0}}$, we
consider the sequence defined by
\begin{equation}
v_0=\mu-1, \qquad v_n=(q^n-\lambda)v_{n-1}-\alpha^n ,
\label{e11}
\end{equation}
where
\begin{equation} \label{eq:mu}
\mu=\sum _{n=0} ^{\infty}\frac {\al^n}
{\prod _{k=1} ^{n}(q^k-\la)}.
\end{equation}
We follow Remark~\ref{rem:1} in
requiring the recursive relation to be valid for all $n\in\mathbb Z$.
This does, in fact, not work if $q^n-\la=0$ for some integer $n\le0$.
However, since the only places where we take recourse on the extension of
\eqref{e11} to negative integers is in Remark~\ref{IRR1} and in the
proof of Proposition~\ref{IRP2}, in a
context where $\la=0$, we do not have to worry about these exceptional
cases.

Let $\cN$~denote the backward shift operator
acting (solely) on the index of the sequence $(v_n)_{n\in\mathbb Z}$,
that is $\cN v_n=v_{n-1}$.
Introduce the difference operator
\begin{equation}
\cD_l=(-\lambda\cN;q)_l\,(\alpha\cN;q)_l
=\prod_{k=0}^{l-1}(\mathcal I+(\lambda-\alpha)q^k\cN-\lambda\alpha q^{2k}\cN^2),
\label{e12}
\end{equation}
where $\mathcal I$ is the identity operator.

\begin{lemma}
\label{IR1}
For $n\in\mathbb Z$ and $l\ge0$ we have
\begin{equation}
\cD_lv_n=q^{l(n-l)}\sum_{s=0}^l\qbinom lsq
q^{\binom {l-s+1}2}(-\alpha)^{s}v_{n-l-s}.
\label{e13}
\end{equation}
\end{lemma}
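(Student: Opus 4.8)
The plan is to prove \eqref{e13} by induction on $l$. Throughout, write $R_l(n)$ for the right-hand side of \eqref{e13}, i.e.
$R_l(n)=q^{l(n-l)}\sum_{s=0}^{l}c^{(l)}_s\,v_{n-l-s}$ with $c^{(l)}_s=\qbinom{l}{s}{q}q^{\binom{l-s+1}2}(-\alpha)^s$, and recall that by Remark~\ref{rem:1} the recurrence \eqref{e11}, and hence every consequence of it, is available for \emph{all} integer indices; this is what lets me apply the induction hypothesis at every shifted argument $v_{n-j}$. The base case $l=0$ is immediate, since $\cD_0=\mathcal I$ and $R_0(n)=q^{0}\qbinom{0}{0}{q}q^{\binom12}v_n=v_n$ because $\binom12=0$.

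For the inductive step I would exploit that all the factors of $\cD_{l+1}$ in \eqref{e12} have scalar coefficients in $\cN$ and therefore commute. Peeling off the $k=l$ factor gives $\cD_{l+1}=\cD_l\,(\mathcal I+\lambda q^l\cN)(\mathcal I-\alpha q^l\cN)$, and applying the induction hypothesis $\cD_l v_{n-j}=R_l(n-j)$ at $j=0,1,2$ yields
\begin{equation*}
\cD_{l+1}v_n=R_l(n)+(\lambda-\alpha)q^{l}R_l(n-1)-\lambda\alpha q^{2l}R_l(n-2).
\end{equation*}
Substituting the explicit shape of $R_l$ and using $R_l(n-j)=q^{-jl}q^{l(n-l)}\sum_s c^{(l)}_s v_{n-l-s-j}$, the whole expression collapses to $q^{l(n-l)}\sum_{m\ge0}\tilde c_m\,v_{n-l-m}$ with $\tilde c_m=c^{(l)}_m+(\lambda-\alpha)c^{(l)}_{m-1}-\lambda\alpha c^{(l)}_{m-2}$.

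It then remains to identify this combination, which a priori runs over $v_{n-l},\dots,v_{n-2l-2}$, with $R_{l+1}(n)$, whose lowest index is $v_{n-l-1}$ and which carries the larger prefactor $q^{(l+1)(n-l-1)}$. This is exactly the point where the recurrence \eqref{e11} must enter: reducing the terms from the top index downward via $v_{n-l-m}=(q^{n-l-m}-\lambda)v_{n-l-m-1}-\alpha^{n-l-m}$ turns the constant coefficients $\tilde c_m$ into the $q^n$-dependent ones required, the emerging powers of $q^n$ assembling precisely the missing factor $q^{(l+1)(n-l-1)}/q^{l(n-l)}=q^{\,n-2l-1}$. I expect the successive substitutions to generate spurious terms $\alpha^{n-l-m}$ that telescope and cancel in consecutive pairs — this is already visible for $l=0$, where reducing both $v_n$ and $v_{n-1}$ produces $\mp\alpha^n$ — so that no additive constant survives. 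After the reduction, matching the coefficient of each $v_{n-l-1-s}$ collapses to a single $q$-binomial identity of $q$-Pascal type relating $c^{(l+1)}_s$ to $c^{(l)}_s$ and $c^{(l)}_{s-1}$. Verifying this identity together with the telescoping of the $\alpha$-powers is the one genuinely computational step; I expect it to be the main obstacle, everything else being bookkeeping.

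If one prefers to avoid tracking the forcing terms, I would note that both sides of \eqref{e13} are homogeneous linear in the sequence, so by the affine structure of the solutions $\mu\prod_{j=1}^n(q^j-\lambda)-\sum_{k}\alpha^k\prod_{j=k+1}^n(q^j-\lambda)$ of \eqref{e11} it suffices to check the identity on the homogeneous solution $P_n=\prod_{j=1}^n(q^j-\lambda)$ and on one particular solution. For $P_n$ the relation $P_n=(q^n-\lambda)P_{n-1}$ has no constant term, so the reduction above runs with no $\alpha$-powers at all and lays the $q$-binomial core bare; the particular solution is then dispatched by the very same computation.
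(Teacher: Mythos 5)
Your plan is correct and follows essentially the same route as the paper's proof: induction on $l$, with the step from $l$ to $l+1$ realized by the three-term combination $R_l(n)+(\lambda-\alpha)q^lR_l(n-1)-\lambda\alpha q^{2l}R_l(n-2)$, the telescoping of the $\alpha$-powers supplied by the recurrence \eqref{e11}, and a final $q$-Pascal identity $\qsbinom lsq+q^{l-s+1}\qsbinom l{s-1}q=\qsbinom{l+1}sq$ to match coefficients. The only cosmetic difference is that you peel off the last operator factor and invoke commutativity, whereas the paper expands $\cD_l$ via the $q$-binomial theorem and reuses the $l=1$ base case to package the recurrence step; the computations you defer do go through exactly as you predict.
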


\begin{proof}
By the $q$-binomial theorem (cf.\ \cite[Ex.~1.2(vi)]{GR})
\begin{equation} \label{eq:qbin}
(1+z)(1+qz)\cdots(1+q^{m-1}z)=
\sum _{\ell=0} ^{m}q^{\binom \ell2}\qbinom m\ell q z^\ell,
\end{equation}
we can write
$$
\cD_l=\sum _{k_1=0} ^{l}\sum _{k_2=0} ^{l}
q^{\binom {k_1}2+\binom {k_2}2}\qbinom l{k_1}q
\qbinom l{k_2}q \la^{k_1}(-\alpha)^{k_2}
\cN^{k_1+k_2}.
$$
Hence,
what we want to prove is
\begin{multline}\label{1}
    \sum_{k_1=0}^l\sum_{k_2=0}^lq^{\binom{k_1}2+\binom{k_2}2}\qbinom
    l{k_1}q\qbinom
    l{k_2}q\lambda^{k_1}(-\alpha)^{k_2}v_{n-k_1-k_2}\\
=q^{l(n-l)}\sum_{s=0}^l\qbinom
    lsqq^{\binom{l-s+1}2}(-\alpha)^{s}v_{n-l-s}
\end{multline}
for $n\in\mathbb Z$ and $l\in\mathbb N_0$. For $l=0,1$, this equality
can be readily verified.

We now assume that \eqref{1} is valid for some $l\ge1$ and all $n$.
Substituting $n-1$ and $n-2$ instead of~$n$, we get
\begin{multline}\label{2}
    \sum_{k_1=0}^l\sum_{k_2=0}^lq^{\binom{k_1}2+\binom{k_2}2}\qbinom
    l{k_1}q\qbinom
    l{k_2}q\lambda^{k_1}(-\alpha)^{k_2}v_{n-k_1-k_2-1}\\
=q^{l(n-l-1)}\sum_{s=0}^l\qbinom
    lsqq^{\binom{l-s+1}2}(-\alpha)^{s}v_{n-l-s-1},
\end{multline}
respectively
\begin{multline}\label{3}
    \sum_{k_1=0}^l\sum_{k_2=0}^lq^{\binom{k_1}2+\binom{k_2}2}\qbinom
    l{k_1}q\qbinom
    l{k_2}q\lambda^{k_1}(-\alpha)^{k_2}v_{n-k_1-k_2-2}\\
=q^{l(n-l-2)}\sum_{s=0}^l\qbinom
    lsqq^{\binom{l-s+1}2}(-\alpha)^{s}v_{n-l-s-2}.
\end{multline}
Next we form the linear combination
\begin{equation} \label{eq:lincomb}
\eqref{1}+(\lambda-\alpha)q^l\cdot\eqref{2}-\lambda\alpha
q^{2l}\cdot\eqref{3}.
\end{equation}
We claim that the left-hand side of \eqref{eq:lincomb} is equal to
the left-hand side of~\eqref{1} with $l$ replaced by $l+1$. To see
this, we rewrite the left-hand side of $\la q^l\cdot\eqref{2}$ in the
form
\begin{multline} \label{eq:lin1}
\lambda q^l\sum_{k_1=0}^l\sum_{k_2=0}^lq^{\binom{k_1}2+\binom{k_2}2}\qbinom
    l{k_1}q\qbinom
    l{k_2}q\lambda^{k_1}(-\alpha)^{k_2}v_{n-k_1-k_2-1}\\
=\sum_{k_1=0}^{l+1}\sum_{k_2=0}^{l+1}
    q^{l-k_1+1+\binom{k_1}2+\binom{k_2}2}\qbinom
    l{k_1-1}q\qbinom
    l{k_2}q\lambda^{k_1}(-\alpha)^{k_2}v_{n-k_1-k_2},
\end{multline}
we rewrite the left-hand side of $-\al q^l\cdot\eqref{2}$ in the form
\begin{multline} \label{eq:lin2}
-\al q^l\sum_{k_1=0}^l\sum_{k_2=0}^lq^{\binom{k_1}2+\binom{k_2}2}\qbinom
    l{k_1}q\qbinom
    l{k_2}q\lambda^{k_1}(-\alpha)^{k_2}v_{n-k_1-k_2-1}\\
=\sum_{k_1=0}^{l+1}\sum_{k_2=0}^{l+1}
    q^{l-k_2+1+\binom{k_1}2+\binom{k_2}2}\qbinom
    l{k_1}q\qbinom
    l{k_2-1}q\lambda^{k_1}(-\alpha)^{k_2}v_{n-k_1-k_2},
\end{multline}
and we rewrite the left-hand side of $-\la\al q^{2l}\cdot\eqref{3}$ in the
form
\begin{multline} \label{eq:lin3}
-\la\al q^{2l}\sum_{k_1=0}^l\sum_{k_2=0}^lq^{\binom{k_1}2+\binom{k_2}2}\qbinom
    l{k_1}q\qbinom
    l{k_2}q\lambda^{k_1}(-\alpha)^{k_2}v_{n-k_1-k_2-2}\\
=\sum_{k_1=0}^{l+1}\sum_{k_2=0}^{l+1}
    q^{(l-k_1+1)+(l-k_2+1)+\binom{k_1}2+\binom{k_2}2}\qbinom
    l{k_1-1}q\qbinom
    l{k_2-1}q\lambda^{k_1}(-\alpha)^{k_2}v_{n-k_1-k_2}.
\end{multline}
By summing the left-hand side of \eqref{1} and the right-hand sides of
\eqref{eq:lin1}, \eqref{eq:lin2}, and \eqref{eq:lin3}, we obtain indeed
the left-hand side of \eqref{1} with $l$ replaced by $l+1$,
after little simplification.

We now turn our attention to the right-hand side of
\eqref{eq:lincomb}, that is, to
$$q^{l(n-l)}\sum_{s=0}^l\qbinom
    lsqq^{\binom{l-s+1}2}(-\alpha)^{s}(v_{n-l-s}+(\lambda-\alpha)
v_{n-l-s-1}-\lambda\alpha
    v_{n-l-s-2}).
$$
By~\eqref{1} with $l=1$ and $n$ replaced by $n-l-s$, this is equal to
$$q^{l(n-l)}\sum_{s=0}^l\qbinom
    lsqq^{\binom{l-s+1}2}(-\alpha)^{s}q^{n-l-s-1}(qv_{n-l-s-1}-\alpha
v_{n-l-s-2}).$$
It is not difficult to transform this
into the right-hand side of~\eqref{1} with $l$ replaced by $l+1$.
\end{proof}

As a corollary, we get for $l\ge0$ and $n\ge2l-1$
\begin{equation}
\ord_q\cD_lv_n=l(n-l).
\label{e14}
\end{equation}
Moreover, we have
\begin{equation*}
q^{-l(n-l)}\cD_lv_n\big|_{q=0}=(-\alpha)^lv_{n-2l}\big|_{q=0}.
\end{equation*}

\begin{remark}
\label{IRR1}
In the proof of Proposition~\ref{IRP2} below, under the
hypothesis $\lambda=0$, we require the estimate
$$
\ord_q\cD_lv_n>l(n-l) \quad\text{if $n<2l-1$},
$$
which complements \eqref{e14} and also follows from Lemma~\ref{IR1}.
This estimate is valid for \emph{negative} indices~$n$ as well:
recall that the definition~\eqref{e11} of our sequence $(v_n)_{n\in\mathbb Z}$
in the case $\lambda=0$ and $\alpha\ne0$ results in
$$
v_{-n-1}=q^n(v_{-n}+\alpha^{-n}),
$$
whence $\ord_qv_{-n}=n-1$ for $n\ge1$, implying the desired estimate
for negative indices.
\end{remark}

\begin{proposition}
\label{IRP1}
Let $\lambda\ne0$, and let the sequence $v_n$ be given by~\eqref{e11}.
Then the Hankel determinant $V_n=\det_{0\le i,j\le n-1}(v_{i+j})$, viewed
as an analytic function \textup(in fact, a polynomial\textup) in
$q$, $\alpha$, $\lambda$, and~$\mu$, admits the representation
\begin{equation*}
V_n=\begin{cases}
\alpha^{n(n-1)/2}\lambda^{n(n-2)/4}(\lambda-(\lambda+\alpha)\mu)^{n/2}
\cdot q^{e_0(n)}+O(q^{e_0(n)+1}) &\kern-1.3cm\text{if $n$ is even}, \\
\alpha^{n(n-1)/2}\lambda^{(n-1)^2/4}(\mu-1)(\lambda-(\lambda+\alpha)\mu)^{(n-1)/2}
\cdot q^{e_0(n)}+O(q^{e_0(n)+1})\\ &\kern-1.3cm\text{if $n$ is odd},
\end{cases}
\end{equation*}
where
\begin{equation}
e_0(n)=\frac{n(n-1)(n-2)}6=\binom n3.
\label{e15}
\end{equation}
In particular, its $q$-order under any specialization of $\alpha$, $\lambda$, and~$\mu$
is at least~$e_0(n)$.
\end{proposition}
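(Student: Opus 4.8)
The plan is to compute the leading $q$-term of $V_n$ by applying elementary row operations built from the difference operators $\cD_l$ of Lemma~\ref{IR1}, which lower the $q$-orders of the entries in a controlled way. Concretely, for each $i$ I would replace the $i$-th row $(v_{i+j})_{0\le j\le n-1}$ by $(\cD_{l_i}v_{i+j})_{0\le j\le n-1}$, where $l_i=\fl{i/2}$. Writing $\cD_{l_i}=\mathcal I+\sum_{k=1}^{2l_i}c_k\cN^k$, this operation uses only rows $i-1,\dots,i-2l_i$, all of which are present since $i-2l_i=i\bmod2\in\{0,1\}$; it is therefore left-multiplication by a fixed unit lower-triangular matrix and leaves the determinant unchanged. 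By Lemma~\ref{IR1}, \eqref{e14}, and the identity $q^{-l(n-l)}\cD_lv_n|_{q=0}=(-\alpha)^lv_{n-2l}|_{q=0}$, the new $(i,j)$-entry has $q$-order exactly $l_i(i+j-l_i)$ and leading coefficient $(-\alpha)^{l_i}w_{j+\epsilon_i}$, where $\epsilon_i:=i-2l_i\in\{0,1\}$ and $w_m:=v_m|_{q=0}$.

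Next I would determine $w_m$ explicitly. Solving the recurrence \eqref{e11} at $q=0$, namely $w_0=\mu-1$ and $w_m=-\lambda w_{m-1}-\alpha^m$, gives $w_m=\bigl((-\lambda)^mA-\alpha^{m+1}\bigr)/(\lambda+\alpha)$ with $A:=(\lambda+\alpha)\mu-\lambda$, so that $(w_m)$ is a combination of the two geometric sequences $(-\lambda)^m$ and $\alpha^m$; this is exactly where $\lambda\ne0$ is used. A direct computation then yields the $2\times2$ Hankel minors $w_jw_{j+2}-w_{j+1}^2=(\lambda-(\lambda+\alpha)\mu)(-\lambda)^j\alpha^{j+1}$, which will serve as the building blocks of the answer.

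To read off the leading term of the (unchanged) determinant, write $\det=\sum_\pi\operatorname{sgn}(\pi)\prod_i\tilde h_{i,\pi(i)}$; each summand has $q$-order $\sum_i l_i(i+\pi(i)-l_i)=\sum_i l_i(i-l_i)+\sum_i l_i\pi(i)$. Since $l_i=\fl{i/2}$ is non-decreasing, the rearrangement inequality shows $\sum_i l_i\pi(i)$ is minimized by the reversing permutation, and a short calculation gives the minimal total order $\binom n3=e_0(n)$. Because each value of $l_i$ is attained by the two consecutive rows $2m,2m+1$, the minimizing permutations are precisely those pairing each row-block $\{2m,2m+1\}$ with an anti-diagonal column-block, with a free $2\times2$ choice inside each block; writing $p=\fl{n/2}$, when $n=2p+1$ is odd the single leftover row $2p$ is forced onto column~$0$. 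Hence the coefficient of $q^{e_0(n)}$ is, up to sign, the product of the $2\times2$ block minors of the previous paragraph, times the single factor $(-\alpha)^pw_0=(-\alpha)^p(\mu-1)$ in the odd case. Substituting the minor evaluation and summing the exponents of $\alpha$, $\lambda$, and $(\lambda-(\lambda+\alpha)\mu)$ reproduces the two claimed expressions; since this leading coefficient is a nonzero polynomial, the $q$-order of $V_n$ is exactly $e_0(n)$, and it is $\ge e_0(n)$ under any specialization.

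The main obstacle I anticipate is the rigorous justification of the block/leading-order extraction: one must check that contributions from non-minimizing permutations have strictly higher $q$-order, that the minimizers factor exactly into the stated anti-diagonal blocks, and---most delicately---that the permutation sign combines with the factors $(-\lambda)^j$ inside the minors to produce the clean $+$ sign in the statement. The sign bookkeeping (reversal of $p$ size-$2$ blocks, each swap contributing $(-1)^{4}=1$, together with the single odd-case row) is routine but must be carried out carefully; the small cases $n=2,3,4$ confirm that the overall sign is indeed $+$.
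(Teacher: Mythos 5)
Your proposal follows essentially the same route as the paper's proof: the same row operations by $\cD_{\fl{i/2}}$, the same $q$-order bookkeeping via Lemma~\ref{IR1} and the rearrangement argument identifying the block anti-diagonal minimizers, and the same evaluation of the $2\times2$ Hankel minors of $v_m|_{q=0}$ (your closed form $w_m=((-\lambda)^mA-\alpha^{m+1})/(\lambda+\alpha)$ is equivalent to the paper's expression obtained from \eqref{e09} with $b_j(q)=-\lambda$). The argument is correct; no substantive differences to report.
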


\begin{proof}
We act on the $i$-th row of the matrix
$(v_{i+j})_{0\le i,j\le n-1}$ by the operator $\cD_{\fl{i/2}}$;
doing this for $i=n-1,n-2,\dots,1,0$ (in this order!).
By definition~\eqref{e12} we have a
sequence of elementary row operations, hence the new matrix
with entries $a_{ij}=\cD_{\fl{i/2}}v_{i+j}$, $0\le i,j\le n-1$,
has the same determinant~$V_n$. According to~\eqref{e13},
$$
e_{ij}:=\ord_qa_{ij}=\fl{\frac i2}\left(i+j-\fl{\frac i2}\right)
=\fl{\frac i2}\left(\cl{\frac i2}+j\right),
$$
and for a permutation $\tau$ of $\{0,1,\dots,n-1\}$ we have
$$
\sum_{i=0}^{n-1}e_{i,\tau(i)}
=\sum_{i=0}^{n-1}\fl{\frac i2}\cl{\frac i2}
+\sum_{i=0}^{n-1}\fl{\frac i2}\tau(i).
$$
We claim that
the minimal value of the latter expression is equal to $n(n-1)(n-2)/6$
and is attained, e.g., for $\tau(0)>\tau(1)>\dots>\tau(n-1)$.
To see this, first observe that,
if $\fl{i_1/2}>\fl{i_2/2}$ and $j_1>j_2$, then
$$
\fl{\frac{i_1}2}j_1+\fl{\frac{i_2}2}j_2
>\fl{\frac{i_1}2}j_2+\fl{\frac{i_2}2}j_1.
$$
Hence we necessarily have $\tau(0)>\tau(i)$
and $\tau(1)>\tau(i)$ for all $i\ge2$.
In other words, the 2-element
set $\{\tau(0),\tau(1)\}$ is $\{n-1,n-2\}$. Continuing in this
manner, we obtain $\{\tau(2),\tau(3)\}=\{n-3,n-4\}$,
$\{\tau(4),\tau(5)\}=\{n-5,n-6\}$, and so on.
It follows that indeed, for any permutation $\tau$, we have
$$
\sum_{i=0}^{n-1}e_{i,\tau(i)}\ge
\sum_{i=0}^{n-1}\fl{\frac i2}\cl{\frac i2}
+\sum_{i=0}^{\lfloor n/2\rfloor-1}i\big(n-1-2i+n-2-2i\big)=\binom n3.
$$
Moreover, the coefficient of the minimal power $q^{n(n-1)(n-2)/6}$
is equal to the determinant of the `anti-diagonal' matrix
\begin{equation}
\begin{pmatrix}
\text{\LARGE0}\!\!\!\!\! & & &
\phantom{|}\fbox{\footnotesize$\begin{matrix} v_{n-2} & v_{n-1} \\ v_{n-1} & v_n \end{matrix}$}\;
\\
& & \!\!\!\!\!\Ddots\!\!\!\!\! &
\\
& \phantom{|}\fbox{\footnotesize$\begin{matrix} (-\alpha)^lv_{n-2l-2} & (-\alpha)^lv_{n-2l-1} \\
(-\alpha)^lv_{n-2l-1} & (-\alpha)^lv_{n-2l} \end{matrix}$} & &
\\
\Ddots\!\!\!\!\! & & & \text{\LARGE0}
\end{pmatrix}
\label{e16}
\end{equation}
evaluated at $q=0$.
Here, if $n$~is odd, the left lower angle of the matrix contains just the
$1\times1$-matrix $(-\alpha)^{(n-1)/2}v_0$.
Let us compute the determinant of a $2\times2$-box in~\eqref{e16} assuming
$q=0$ throughout.
For $n\ge0$, the explicit expression \eqref{e09} for $v_n$ with
$b_j(q)=q^j-\la=-\la$ yields
$$v_n=\mu(-\la)^n-(-\la)^n\frac {1-(-\frac {\alpha} {\la})^{n+1}} {1+\frac
{\alpha} {\la}}.$$
Hence,
\begin{equation*}
\det\begin{pmatrix} (-\alpha)^lv_{n-2l-2} & (-\alpha)^lv_{n-2l-1} \\
(-\alpha)^lv_{n-2l-1} & (-\alpha)^lv_{n-2l} \end{pmatrix}
=\alpha^{n-1}(-\la)^{n-2l-1}\left(\mu\left(1+\frac {\alpha}
{\la}\right)-1\right).
\end{equation*}
Therefore the desired coefficient of $q^{n(n-1)(n-2)/6}$ in $V_n$ is equal to
$$
\prod_{l=0}^{n/2-1}\bigl(\alpha^{n-1}(-\lambda)^{n-2l-2}(\lambda-(\lambda+\alpha)\mu)\bigr)
$$
if $n$ is even, and it is equal to
\begin{equation*}
(-\alpha)^{(n-1)/2}(\mu-1)\prod_{l=0}^{(n-1)/2-1}\bigl(\alpha^{n-1}(-\lambda)^{n-2l-2}(\lambda-(\lambda+\alpha)\mu)\bigr)
\end{equation*}
if $n$ is odd.
\end{proof}

\begin{proposition}
\label{IRP2}
Let $\lambda=0$, and let the sequence $v_n$ be given
by~\eqref{e11}.
Then the Hankel determinant $V_n=\det_{0\le i,j\le n-1}(v_{i+j})$, viewed
as an analytic function \textup(in fact, a polynomial\textup) in $q$, $\alpha$, and~$\mu$,
admits the representation
\begin{alignat}{2}
\label{e17}
V_n&=(-1)^{n(n+2)/8}\alpha^{n(5n-2)/8}\mu^{n/2}
\cdot q^{e_0(n)}+O(q^{e_0(n)+1})
&\quad &\text{if $n$ is even},
\\
\label{e18}
V_n
&=(-1)^{(n-1)(n-3)/8}\alpha^{(n-1)(5n+1)/8}K_{(n-1)/2}
\cdot q^{e_0(n)}+O(q^{e_0(n)+1})
&\quad &\text{if $n$ is odd},
\end{alignat}
where the sequence $K_n=K_n(\alpha,\mu)$ is defined in~\eqref{e21}
below, and
\begin{equation}
e_0(n)=\begin{cases}
\dfrac{n(n-2)(5n-2)}{24} & \text{if $n$ is even}, \\[2.3mm]
\dfrac{n(n-1)(5n-7)}{24} & \text{if $n$ is odd}.
\end{cases}
\label{e19}
\end{equation}
In particular, the $q$-order of~$V_n$ under any specialization of $\alpha$ and~$\mu$
is at least~$e_0(n)$.
\end{proposition}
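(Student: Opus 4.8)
The plan is to follow the strategy of the proof of Proposition~\ref{IRP1}: perform determinant-preserving row operations and then read off the $q$-order and the leading coefficient from an ``anti-diagonal'' structure. The decisive new feature for $\lambda=0$ is that the operator $\cD_l$ degenerates. When $\lambda=0$ the factor $(-\lambda\cN;q)_l$ collapses to the identity, so $\cD_l=(\alpha\cN;q)_l$ is a polynomial in~$\cN$ of degree~$l$ instead of~$2l$. Hence $\cD_l$ applied to the $i$-th row involves only the rows $i,i-1,\dots,i-l$, and is a legitimate elementary row operation as soon as $l\le i$, rather than $l\le i/2$ as in Proposition~\ref{IRP1}. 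This doubling of the admissible depth is exactly what raises the $q$-order from $\binom n3$ to the value in~\eqref{e19}.

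First I would apply $\cD_{l_i}$ to the $i$-th row, for $i=n-1,n-2,\dots,1,0$ in this order, with $l_i=\min(i,\fl{n/2})$; since the coefficient of $v_{i+j}$ in $\cD_{l_i}v_{i+j}$ equals~$1$, the determinant $V_n$ is unchanged. Writing $a_{ij}=\cD_{l_i}v_{i+j}$, I would combine~\eqref{e13} with $\ord_qv_m=\max(0,-m-1)$ (the formula of Remark~\ref{IRR1}, the negative indices now genuinely occurring) to obtain
$$
e_{ij}:=\ord_qa_{ij}=l_i(i+j-l_i)+\max\bigl(0,\,2l_i-i-j-1\bigr).
$$
The cap $l_i\le\fl{n/2}$ is essential: pushing $l_i$ up to~$i$ would move one past the vertex of the parabola $l\mapsto l(n-1-l)$ and thereby \emph{lower} the anti-diagonal entries, producing cancellation and spoiling the match with~\eqref{e19}.

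Next I would prove that $\min_\tau\sum_{i=0}^{n-1}e_{i,\tau(i)}=e_0(n)$, the minimum being attained at the reverse permutation $\tau(i)=n-1-i$. The matrix $(e_{ij})$ is of (weak) inverse-Monge type, so the exchange argument of Proposition~\ref{IRP1} shows the reverse permutation to be optimal; a direct evaluation of $\sum_i e_{i,n-1-i}$, splitting the index range at $i=\fl{n/2}$, then reproduces both cases of~\eqref{e19}. This gives $\ord_qV_n\ge e_0(n)$, while the coefficient of $q^{e_0(n)}$ is the signed sum, over the minimizing permutations, of the products of the leading coefficients of the entries $a_{i,\tau(i)}$. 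For all entries used by these permutations one has $m-2l\ge-1$, so (as recorded just after~\eqref{e14}) the leading coefficient of $\cD_lv_m$ is $(-\alpha)^lv_{m-2l}\big|_{q=0}$, with $v_k|_{q=0}=-\alpha^k$ for $k\ge0$ and $v_{-1}|_{q=0}=\mu$.

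The main obstacle is evaluating this leading coefficient, which splits sharply by the parity of~$n$. The bottom block consists of the rows with $l_i=\fl{n/2}$, namely $i=\fl{n/2},\dots,n-1$, carrying shifts of the single sequence $\cD_{\fl{n/2}}v_\bullet$. For even $n=2m$ there are exactly~$m$ such rows, the reverse permutation is the \emph{unique} minimizer, each of the $m$ bottom anti-diagonal entries equals $\cD_m v_{n-1}$ with leading coefficient proportional to $v_{-1}|_{q=0}=\mu$, and the product telescopes to $(-1)^{n(n+2)/8}\alpha^{n(5n-2)/8}\mu^{n/2}$. For odd $n=2m+1$ there is one extra bottom row, several minimizing permutations now tie, and their contributions do \emph{not} telescope; instead the $(m+1)$-fold Hankel block of shifts of $\cD_m v_\bullet$ must be evaluated directly. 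Summing the tied signed contributions assembles exactly the polynomial $K_{(n-1)/2}(\alpha,\mu)$, and the nesting of these blocks for successive~$n$ produces the recurrence defining the sequence~$K_n$. Confirming that the minimizing permutations are precisely the claimed ones and that their signed products combine into $K_{(n-1)/2}$ with the prefactor $(-1)^{(n-1)(n-3)/8}\alpha^{(n-1)(5n+1)/8}$ is the delicate combinatorial core of the argument.
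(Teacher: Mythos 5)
Your proposal follows essentially the same route as the paper's proof: the row operations $\cD_{l_i}$ with $l_i=\min\{i,\fl{n/2}\}$, the order estimate $e_{ij}\ge l_i(i+j-l_i)$ sharpened via Remark~\ref{IRR1} for $i+j<2l_i-1$, the rearrangement argument identifying the minimizing permutations, and the reduction of the odd case to the block determinant yielding $K_{(n-1)/2}$. One small slip to fix: $v_0\big|_{q=0}=\mu-1$, not $-\alpha^0=-1$ (your rule $v_k\big|_{q=0}=-\alpha^k$ holds only for $k\ge1$), and this value is precisely what places the diagonal entries $\mu-1$ in the determinant \eqref{e21}.
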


\begin{proof}
This time we act on the $i$-th row of the matrix
$(v_{i+j})_{0\le i,j\le n-1}$, for $i=n-\nobreak1,\allowbreak n-2,\dots,1,0$,
by the operator $\cD_{l_i}$, where $l_i=\min\{i,\fl{n/2}\}$. Again,
these are elementary row transformations because $\cD_l=(\alpha\cN;q)_l$
in~\eqref{e12} for $\lambda=0$.
For the entries $a_{ij}=\cD_{l_i}v_{i+j}$ of the resulting
matrix, whose determinant is~$V_n$, we have
$e_{ij}:=\ord_qa_{ij}\ge l_i(i+j-l_i)$, with equality occurring
when $i+j\ge2l_i-1$ (cf.\ Remark~\ref{IRR1}).
This fact and the fact that the sequence $(l_i)$ is non-decreasing imply,
for any permutation $\tau$ of $\{0,1,\dots,n-1\}$, that
\begin{equation}
\label{e20}
\sum_{i=0}^{n-1}e_{i,\tau(i)}
\ge\sum_{i=0}^{n-1}l_i(i+\tau(i)-l_i)
\ge\sum_{i=0}^{n-1}l_i(n-1-l_i)
=e_0(n),
\end{equation}
for $e_0(n)$ defined in~\eqref{e19}.
To get equality in~\eqref{e20}, the following two conditions should be satisfied:
(a)~for each~$i$ we have $i+\tau(i)\ge2l_i-1$, implying
$\tau(i)\ge2\fl{n/2}-i-1$ for $i\ge\fl{n/2}$, and
(b)~for $i<\fl{n/2}$ we have $\tau(i)=n-i-1$.

In the case of even~$n$, condition~(a) gives us $\tau(i)\ge n-i-1$ for $i\ge n/2$,
which in view of condition~(b) is possible if and only if $\tau(i)=n-i-1$ for
each $i=0,1,\dots,\allowbreak n-\nobreak1$. Therefore, the unique anti-diagonal product
$(-1)^{n(n-1)/2}\*\prod_{i=0}^{n-1}a_{i,n-1-i}$
provides the lowest power $q^{n(n-2)(5n-2)/24}$
in the determinant $\det(a_{ij})_{0\le i,j\le n-1}$, implying~\eqref{e17}.

If $n$~is odd, conditions (a) and (b) take the form
$$
\begin{aligned}
\tau(i)&=n-i-1 &\quad&\text{for $i=0,1,\dots,\tfrac{n-3}2$},
\\
\tau(i)&\ge n-i-2 &\quad&\text{for $i=\tfrac{n-1}2,\dots,n-1$}.
\end{aligned}
$$
In this case the coefficient of the lowest power $q^{n(n-1)(5n-7)/24}$ in~$V_n$
is equal to the determinant of the matrix
\begin{equation*}
\begin{pmatrix}
\text{\LARGE0}\!\!\!\!\! &
\phantom{|}\hbox{\footnotesize$\begin{matrix} & & & \!\!\!\!\!\!\!\!\!\! (-\alpha)^0v_{n-1} \\
& & \!\!\!\!\!\!\!\!\!\! (-\alpha)^1v_{n-3} & \\
& \!\!\!\!\!\!\!\!\!\! \Ddots & & \\
\!\!\!\!\!\!\!\!\!\! (-\alpha)^{(n-3)/2}v_2 & & & \end{matrix}$}\;
\\
\phantom{|}\fbox{\footnotesize$\begin{matrix} 0 & & \!\!\!\!\! (-\alpha)^{\fl{n/2}}v_{-1} & \!\!\!\!\! (-\alpha)^{\fl{n/2}}v_0 \\
& \Ddots & \!\!\!\!\! (-\alpha)^{\fl{n/2}}v_0 & \!\!\!\!\! (-\alpha)^{\fl{n/2}}v_1 \\
(-\alpha)^{\fl{n/2}}v_{-1} \!\!\! & \Ddots & & \vdots \\
(-\alpha)^{\fl{n/2}}v_0 \!\!\! & \dots & \dots & \!\!\!\!\! (-\alpha)^{\fl{n/2}}v_{\fl{n/2}} \end{matrix}$}
& \text{\LARGE0}
\end{pmatrix}
\end{equation*}
evaluated at $q=0$. It is clear that the non-vanishing of the coefficient
will follow from
the non-vanishing of the determinant
\begin{align*}
\left.\det\begin{pmatrix} 0 & & \!\!\! v_{-1} & \!\!\! v_0 \\
& \Ddots & \!\!\! v_0 & \!\!\! v_1 \\
v_{-1} \!\! & \Ddots & & \vdots \\
v_0 \!\! & \dots & \dots & \!\!\! v_{(n-1)/2} \end{pmatrix}\right|_{q=0}
&=\det\begin{pmatrix} 0 & & \!\!\! \mu & \!\!\! \mu-1 \\
& \Ddots & \!\!\! \mu-1 & \!\!\! -\alpha \\
\mu \!\! & \Ddots & & \vdots \\
\mu-1 \!\! & \dots & \dots & \!\!\! -\alpha^{(n-1)/2} \end{pmatrix}
\\
&=(-1)^{(n-1)(n-3)/8}K_{(n-1)/2},
\end{align*}
where
\begin{equation}
K_n=K_n(\alpha,\mu)=\det\begin{pmatrix}
\mu-1 & -\alpha & -\alpha^2 & \hdots & -\alpha^n \\
\mu & \mu-1 & -\alpha & \hdots & -\alpha^{n-1} \\
& \mu & \mu-1 & \hdots & -\alpha^{n-2} \\
& & \ddots & \ddots & \vdots \\
\!\!\!\text{\LARGE0}\!\!\! & & & \mu & \mu-1
\end{pmatrix}.
\label{e21}
\end{equation}
Summarizing we have~\eqref{e18}, and the proposition follows.
\end{proof}

\begin{remark}
Generically, the power $e_0(n)$ in \eqref{e19} is exact.
Indeed, the determinant $K_n$ is not identically zero, since
it is trivially non-zero for $\alpha=0$ and $\mu\ne1$.
In fact, we can also
write down explicit formulas for~$K_n$, since the sequence satisfies the
linear recurrence
\begin{equation*}
\begin{gathered}
K_{n+2}=(\mu-1-\alpha\mu)K_{n+1}+\alpha\mu^2K_n
\quad\text{for}\; n=0,1,2,\dots,
\\
K_0=\mu-1, \quad K_1=(\mu-1)^2+\alpha\mu.
\end{gathered}
\end{equation*}
\end{remark}

To find an (asymptotic) upper bound for
our Hankel determinant
$$\det_{0\le i,j\le n-1}(v_{i+j}),$$ where
\begin{equation}
v_n=\sum_{k=n+1}^\infty\frac{\alpha^k}{\prod_{j=n+1}^k(q^j-\lambda)}
\quad\text{for}\; n=0,1,2,\dots,
\label{e23}
\end{equation}
we will use the difference operator
\begin{equation}
\tcD_l=(\alpha q^{-1}\cN;q^{-1})_l=\prod_{k=1}^l(\mathcal I-\alpha q^{-k}\cN).
\label{e24}
\end{equation}

\begin{remark}
\label{IRR2}
The operators~\eqref{e12} and \eqref{e24} are directly related by
$$
\cD_lv_n=q^{ln-\binom l2}\tcD_lv_{n-l},
$$
where $l\ge0$ and $n\in\mathbb Z$. This is seen by applying the
$q$-binomial theorem \eqref{eq:qbin} to \eqref{e24}, and by comparing
the result with \eqref{e13}.
Equivalently,
$$
\tcD_lv_n=q^{\binom l2-l(n+l)}\cD_lv_{n+l}.
$$
\end{remark}

\begin{lemma} \label{lem:<1}
Let $q$, $\al$, $\la$ be complex numbers with $\vert q\vert>1$, $\al\ne0$,
and $\la$ not a {\em(}positive{\em)} power of $q$. Then,
for $0\le l\le n$, we have
$$
\vert\tcD_lv_n\vert\le \begin{cases}
\vert q\vert^{  \binom l2 - n l}C_1^{n+1}&\text{if }\la\ne0,\\
\vert q\vert^{  - n l}C_2^{n+1}&\text{if }\la=0,
\end{cases}
$$
where $C_1$ and $C_2$ are positive real numbers not depending on $n$ and~$l$.
\end{lemma}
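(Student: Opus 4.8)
\emph{Plan.} The plan is to reduce the whole estimate to one exact expansion of $\tcD_lv_n$ as a power series in $\al$ and then bound its coefficients, the point being to expose a $q$-binomial cancellation that a termwise bound of \eqref{e13} would destroy. First I would expand the operator by the $q$-binomial theorem \eqref{eq:qbin} as $\tcD_l=\sum_{s=0}^l(-\al)^sq^{-\binom{s+1}2}\qbinom ls{q^{-1}}\cN^s$, insert the tail series \eqref{e23} for each $v_{n-s}$, and group by the total degree $n+M$ in $\al$ (with $M\ge1$ the common number of factors $q^{j}-\la$). This gives
\[
[\al^{n+M}]\tcD_lv_n=q^{-Mn-\binom{M+1}2}\sum_{s=0}^l(-1)^sq^{-\binom{s+1}2}\qbinom ls{q^{-1}}q^{Ms}\prod_{i=1}^M\frac1{1-\la q^{-(n-s+i)}} .
\]
The smallness comes from the identity $\sum_{s=0}^l(-1)^sq^{-\binom{s+1}2}\qbinom ls{q^{-1}}q^{ts}=\prod_{r=t-l}^{t-1}(1-q^r)$, again a consequence of \eqref{eq:qbin}, whose right-hand side vanishes for every $1\le t\le l$.

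For $\la=0$ the products above are $1$, the inner sum collapses (apply the identity with $t=M$) to $\prod_{r=M-l}^{M-1}(1-q^r)=\prod_{j=1}^l(1-q^{M-j})$, and therefore $\tcD_lv_n=\sum_{M\ge l+1}q^{-Mn-\binom{M+1}2}\prod_{j=1}^l(1-q^{M-j})\,\al^{n+M}$: the series genuinely starts at $M=l+1$. Estimating $\bigl|\prod_{j=1}^l(1-q^{M-j})\bigr|\le2^l|q|^{lM-\binom{l+1}2}$ and summing the resulting geometric series, dominated by its first term $M=l+1$, yields $|\tcD_lv_n|\le|q|^{-nl}C_2^{n+1}$ once the powers of $\al$ and the factor $2^l$ are absorbed into $C_2^{n+1}$.

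For $\la\ne0$ the products depend on $s$ through $x_i:=\la q^{-(n+i)}$, so I would set $\Phi(y)=\prod_{i=1}^M(1-x_iy)^{-1}$ and read the inner sum as a functional $L[\Phi]=\sum_{s=0}^lw_s\Phi(q^s)$, with $w_s=(-1)^sq^{Ms-\binom{s+1}2}\qbinom ls{q^{-1}}$, that by the identity annihilates $1,y,\dots,y^{l-M}$. Since expanding $\Phi$ in the $x_i$ at the points $q^s$ is legitimate only when $|x_iq^s|<1$, I would instead use the residue representation: deforming a contour around $q^0,\dots,q^l$ out to infinity picks up only the poles $y=1/x_i=q^{n+i}/\la$ of $\Phi$ and gives, for $M=1$, the closed form $L[\Phi]=x_1^l\prod_{r=1}^l(1-q^r)\big/\prod_{s=0}^l(1-x_1q^s)$, and in general a sum of $M$ such residue terms. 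Here $x_i^l$ supplies the decay and $\prod_{r=1}^l(1-q^r)\sim|q|^{\binom{l+1}2}$ the growth, while the Vandermonde factors $\prod_{i'\ne i}(1-x_{i'}/x_i)=\prod_{i'\ne i}(1-q^{i-i'})$ are pure powers of $q$ independent of $\la$ and $n$, hence harmless; the contribution $M=1$ dominates, and collecting $q$-powers gives the exponent $\binom l2-nl$ as a clean upper bound, the residual $|q|^{-(n+1)}$ and the $\al,\la$ powers going into $C_1^{n+1}$.

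The main obstacle is making this last bound uniform over all $0\le l\le n$ for a fixed but arbitrary $\la$: when $|\la|\ge|q|^{\,n+1-l}$ a pole $q^{n+i}/\la$ lands among the evaluation points $q^s$, which is exactly where the power-series expansion of $\Phi$ fails and why the residue identity, valid for all $\la$, is indispensable. To finish I would bound the factors $1-x_iq^s=q^{s-n-i}(q^{\,n+i-s}-\la)$ by the two-regime estimate $|q^j-\la|\ge\max\bigl(\tfrac12|q|^j,\tfrac12|\la|\bigr)$ for $j$ outside a bounded range, and by $\min_{j\ge1}|q^j-\la|>0$ (positive precisely because $\la\notin q^{\NN}$) for the finitely many $j$ inside it; the finitely many pairs $(n,l)$ with $n$ below the corresponding thresholds are then absorbed into $C_1$ and $C_2$.
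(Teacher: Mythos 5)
Your proposal is correct in outline but takes a genuinely different, and much heavier, route than the paper. The paper's proof is a pure termwise estimate: it bounds $|v_m|\le C^{m+1}$ directly from the tail series \eqref{e23}, converts $\tcD_l$ to $\cD_l$ via Remark~\ref{IRR2}, expands $\cD_l$ as the double sum $\sum_{k_1,k_2}q^{\binom{k_1}2+\binom{k_2}2}\qsbinom l{k_1}q\qsbinom l{k_2}q\la^{k_1}(-\al)^{k_2}\cN^{k_1+k_2}$, and bounds this term by term using $\bigl|\qsbinom mkq\bigr|\le\qsbinom mk{|q|}$ together with the $q$-binomial theorem to resum; the prefactor $q^{\binom l2-l(n+l)}$ already absorbs the worst term $k_1=k_2=l$, so no cancellation at all is needed to reach $|q|^{\binom l2-nl}C_1^{n+1}$. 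You are right that a termwise bound of \eqref{e13} would lose the exponent, but the paper sidesteps this by termwise-bounding a different (rearranged) expansion, not by exhibiting cancellation. Your exact expansion in powers of $\al$, based on the identity $\sum_{s=0}^l(-1)^sq^{-\binom{s+1}2}\qsbinom ls{q^{-1}}q^{ts}=\prod_{r=t-l}^{t-1}(1-q^r)$ and, for $\la\ne0$, on a residue representation, does work and yields sharper information (an exact series whose first term is $M=l+1$ when $\la=0$, and an extra factor $|q|^{-n-1}$ in both cases), but it is far more than the lemma requires. Two points in your sketch need more care than you indicate: for $M\ge2$ the weights $w_s=(-1)^sq^{Ms-\binom{s+1}2}\qsbinom ls{q^{-1}}$ depend on $M$, so the numerator of the residue at $y=1/x_i$ vanishes only to order $l-M+1$ in $x_i$, not to order $l$ as your phrase ``$x_i^l$ supplies the decay'' suggests; the accounting still closes (the $M$-th term carries an extra factor of roughly $|q|^{-\binom M2}(|\al|/|\la|)^{M-1}$ relative to $M=1$, so the sum over $M$ is dominated by its first term), but this, as well as the crude treatment of the terms with $M>l$ where no cancellation is available, must actually be verified. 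Neither point is fatal, and your two-regime lower bound on $|q^j-\la|$ correctly handles the uniformity in $\la$; still, you should be aware that the elementary termwise argument renders the entire residue apparatus unnecessary here.
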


\begin{proof}
Let first $\la\ne0$. From \eqref{e23}, it is easy to see that we can find a
real number $C>1$ (depending on $q$, $\al$ and $\la$, but not on
$n$), such that $\vert
v_n\vert\le C^{n+1}$. Making use of the $q$-binomial theorem \eqref{eq:qbin},
of the fact that
$$\left\vert\begin{bmatrix}
m\\k\end{bmatrix}_q\right\vert\le
\begin{bmatrix}
m\\k\end{bmatrix}_{\vert q\vert} $$
(following from the polynomiality in $q$ of $\left[\begin{smallmatrix}
m\\k\end{smallmatrix}\right]_q$, the coefficients being non-negative
integers), and of our observation in Remark~\ref{IRR2}, we have
\begin{align*}
\vert\tcD_lv_n\vert
&= \vert q\vert^{\binom l2-l(n+l)}\vert \cD_lv_{n+l}\vert\\
&\le \vert q\vert^{-\binom {l+1}2-ln}
\sum _{k_1=0} ^{l}\vert q\vert^{\binom {k_1}2}\begin{bmatrix}
l\\k_1\end{bmatrix}_{\vert q\vert} \vert\la\vert^{k_1}
\sum _{k_2=0} ^{l}
\vert q\vert^{\binom {k_2}2}\begin{bmatrix}
l\\k_2\end{bmatrix}_{\vert q\vert} \vert\al\vert^{k_2} \vert v_{n+l-k_1-k_2}\vert\\
&\le \vert q\vert^{-\binom {l+1}2-ln}
C^{n+l+1}\prod _{j=0} ^{l-1}(1+\vert\la\vert\,\vert q\vert^j)
(1+\vert\alpha\vert\,\vert q\vert^j)\\
&\le \vert q\vert^{\binom l2-l-ln}
C^{n+l+1}(1+\vert\la\vert)^l(1+\vert\al\vert)^l,
\end{align*}
which, in view of $l\le n$, is exactly in line with the first assertion of the lemma.

In the case $\la=0$, we can proceed in the same way. The only
difference is that the above sum over $k_1$ reduces to just the summand
for $k_1=0$. Hence, we obtain
\begin{align*}
\vert\tcD_lv_n\vert
&\le \vert q\vert^{-\binom {l+1}2-ln}
\sum _{k_2=0} ^{l}
\vert q\vert^{\binom {k_2}2}\begin{bmatrix}
l\\k_2\end{bmatrix}_{\vert q\vert} \vert\al\vert^{k_2} \vert v_{n+l-k_2}\vert\\
&\le \vert q\vert^{-\binom {l+1}2-ln}
C^{n+l+1}\prod _{j=0} ^{l-1}
(1+\vert\alpha\vert\,\vert q\vert^j)\\
&\le \vert q\vert^{-l-ln}
C^{n+l+1}(1+\vert\al\vert)^l,
\end{align*}
which is exactly in line with the second assertion of the lemma.
\end{proof}

\begin{proposition}
\label{IRP3}
Let the sequence $v_n$ be given by~\eqref{e23}.
Then, as $n$ tends to $\infty$,
the Hankel determinant $V_n=\det_{0\le i,j\le n-1}(v_{i+j})$
is asymptotically
\begin{equation}
\vert V_n\vert\le\begin{cases}
\vert q\vert^{-n^3/3}\exp(O(n^2))
&\text{if $\lambda\ne0$},
\\
\vert q\vert^{-n^3/2}\exp(O(n^2))
&\text{if $\lambda=0$}.
\end{cases}
\label{e26}
\end{equation}
\end{proposition}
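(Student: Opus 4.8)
The plan is to bound the Hankel determinant $V_n=\det_{0\le i,j\le n-1}(v_{i+j})$ by performing exactly the same row operations that were used in the proofs of Propositions~\ref{IRP1} and~\ref{IRP2}, but now estimating the resulting entries analytically via Lemma~\ref{lem:<1} rather than extracting their $q$-order. Since the operators $\cD_l$ (equivalently $\tcD_l$) act as elementary row transformations, applying them to the rows leaves the determinant unchanged. The key observation is that the exponent bookkeeping that produced the $q$-\emph{order} $e_0(n)$ in those propositions will, under the analytic estimate of Lemma~\ref{lem:<1}, produce a power $\vert q\vert^{-e_0(n)}$ up to the multiplicative error coming from the constants $C_1^{n+1}$ or $C_2^{n+1}$ and from the binomial/Hadamard combinatorics, all of which are $\exp(O(n^2))$.

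Concretely, I would first reduce the estimate for $V_n$ to an estimate for a single (generalized) diagonal of the transformed matrix via Hadamard's inequality, or more simply via the crude bound $\vert\det\vert\le n!\,\max_\tau\prod_i\vert a_{i,\tau(i)}\vert$, noting that $\log n!=O(n^2)$ is absorbed into the error term. The transformed entries are $a_{ij}=\cD_{l_i}v_{i+j}$ with $l_i=\fl{i/2}$ in the case $\la\ne0$ (as in Proposition~\ref{IRP1}) and $l_i=\min\{i,\fl{n/2}\}$ in the case $\la=0$ (as in Proposition~\ref{IRP2}). Using Remark~\ref{IRR2} to pass from $\cD_{l_i}$ to $\tcD_{l_i}$ and then invoking Lemma~\ref{lem:<1}, each entry satisfies $\vert a_{ij}\vert\le\vert q\vert^{-l_i(i+j-l_i)+O(i+j)}\,C^{i+j+1}$ in the $\lambda\ne0$ case (and the analogous bound without the $\binom{l}2$ shift when $\lambda=0$). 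The dominant $\vert q\vert$-exponent in any diagonal product $\prod_i a_{i,\tau(i)}$ is thus $-\sum_i l_i(i+\tau(i)-l_i)$, which is precisely the quantity minimized in \eqref{e20}; its minimum over all~$\tau$ equals $e_0(n)$.

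The main step is therefore to verify that $\sum_{i=0}^{n-1}l_i\bigl(i+\tau(i)-l_i\bigr)\ge e_0(n)$ for every permutation~$\tau$, so that $\max_\tau\prod_i\vert a_{i,\tau(i)}\vert\le\vert q\vert^{-e_0(n)}\exp(O(n^2))$. For $\lambda\ne0$ with $l_i=\fl{i/2}$ this is the content of the permutation argument in the proof of Proposition~\ref{IRP1}, giving $e_0(n)=\binom n3\sim n^3/6$; combined with $\vert q\vert^{-\binom l2}\le\vert q\vert^{O(n^2)}$ and the factor $C_1^{n+1}$, one lands on $\vert q\vert^{-n^3/3}\exp(O(n^2))$ once the asymptotics $\binom n3+$ (the accumulated $\binom{l_i}2$ shifts) are tallied. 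For $\lambda=0$ with $l_i=\min\{i,\fl{n/2}\}$ the same lower bound gives $e_0(n)$ as in~\eqref{e19}, whose leading term is $5n^3/24$; here the larger shift values $l_i$ push the exponent up to $\vert q\vert^{-n^3/2}\exp(O(n^2))$.

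The hard part, and the only place requiring genuine care rather than routine estimation, is matching the \emph{leading coefficients} of the cubic exponents: one must check that after summing $-\sum_i l_i(i+\tau(i)-l_i)$ at the optimal~$\tau$ \emph{together with} the $\tcD_l$-to-$\cD_l$ conversion shifts of size $\binom{l_i}2$ from Lemma~\ref{lem:<1}, the resulting cubic-in-$n$ exponent has leading coefficient exactly $-1/3$ when $\lambda\ne0$ and $-1/2$ when $\lambda=0$, with everything of order $n^2$ and below (including $\log n!$, the constants $C_i^{n+1}$, and the $(1+\vert\lambda\vert)^l(1+\vert\alpha\vert)^l$ factors) swept into $\exp(O(n^2))$. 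I expect this to be a direct computation with $\sum i^2$-type sums, confirming the two cases of \eqref{e26}.
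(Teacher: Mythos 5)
Your overall strategy---elementary row operations, the crude bound $\vert V_n\vert\le n!\max_\tau\prod_i\vert a_{i,\tau(i)}\vert$, and Lemma~\ref{lem:<1} for the entries---is exactly the paper's, but there is a genuine gap in your choice of row operators, and it is located precisely at the step you yourself flag as ``the hard part'': the tally of the cubic exponents does not come out right. You reuse the operators from Propositions~\ref{IRP1} and~\ref{IRP2}, with $l_i=\fl{i/2}$ (resp.\ $l_i=\min\{i,\fl{n/2}\}$). Those choices were optimized for the $q$-\emph{order}, i.e.\ the behaviour at $q=0$, where the exactness of $\ord_q\cD_lv_m=l(m-l)$ forces the constraint $m\ge 2l-1$ and hence caps $l_i$ near $i/2$. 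For the analytic bound with $\vert q\vert>1$ the relevant quantity is instead the exponent $\binom l2-ml$ from Lemma~\ref{lem:<1}, which is \emph{decreasing} in $l$ on the whole range $0\le l\le m$; so one should take $l$ as large as the lemma permits, namely $l_i=i$ (the binding constraint being the column $j=0$). Run the numbers with your choices: for $\lambda\ne0$ the entry bound $\vert\tcD_{l_i}v_{i+j}\vert\le\vert q\vert^{\binom{l_i}2-(i+j)l_i}C_1^{i+j+1}$ at the extremal permutation $\tau(i)=n-1-i$ gives the exponent
$$\sum_{i=0}^{n-1}\Bigl(\tbinom{\fl{i/2}}2-(n-1)\fl{i/2}\Bigr)\sim\frac{n^3}{24}-\frac{n^3}{4}=-\frac{5n^3}{24},$$
which falls short of the claimed $-n^3/3=-8n^3/24$; your hope that $e_0(n)\sim n^3/6$ plus the ``accumulated $\binom{l_i}2$ shifts'' ($\sim n^3/24$) reaches $n^3/3$ fails because $4/24+1/24=5/24\ne 8/24$. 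Likewise for $\lambda=0$ your choice yields only $-(n-1)\sum_i\min\{i,\fl{n/2}\}\sim-3n^3/8$, short of $-n^3/2$.

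The fix, which is what the paper does, is to apply $\tcD_i$ to the $i$-th row. Then $\vert\tcD_iv_{i+\tau(i)}\vert\le\vert q\vert^{\binom i2-(i+\tau(i))i}C_1^{i+\tau(i)+1}$, the maximizing permutation is again $\tau(i)=n-1-i$ by rearrangement, and
$$\sum_{i=0}^{n-1}\Bigl(\tbinom i2-(n-1)i\Bigr)=\tbinom n3-(n-1)\tbinom n2\sim\frac{n^3}{6}-\frac{n^3}{2}=-\frac{n^3}{3},$$
with the analogous computation (no $\binom i2$ term) giving $-n^3/2$ when $\lambda=0$. A secondary caution: your guiding heuristic that the $q$-order bookkeeping ``will produce a power $\vert q\vert^{-e_0(n)}$'' conflates the lowest power of $q$ in a polynomial (which governs $q\to0$) with its size for $\vert q\vert>1$ (governed by degree and coefficients); here the two optimizations genuinely select different operators, which is exactly why Propositions~\ref{IRP1}--\ref{IRP2} and Proposition~\ref{IRP3} produce different cubic constants ($1/6$ and $5/24$ versus $1/3$ and $1/2$).
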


\begin{proof}
Acting by the operator $\tcD_i$ on the
$i$-th row of the matrix $(v_{i+j})_{0\le i,j\le n-1}$
(this results in elementary row operations according to~\eqref{e24})
we get the matrix $(a_{ij})_{0\le i,j\le n-1}$ with entries
$
a_{ij}=\tcD_iv_{i+j},$
whose determinant is equal to $V_n$.

Let now $\la\ne0$.
Writing $\mathfrak S_n$ for the
symmetric group on $\{0,1,2,\dots,n-1\}$, we have
\begin{align*}
\vert V_n\vert&\le n!\max_{\tau\in \mathfrak S_n}
\prod _{i=0} ^{n-1}\vert  \tcD_iv_{i+\tau(i)}\vert\\
&\le n!\max_{\tau\in \mathfrak S_n}\prod _{i=0} ^{n-1}
\vert q\vert^{  \binom i2 -(i+\tau(i)) i}
C_1^{i+\tau(i)+1}\\
&\le \exp(O(n^2))
\prod _{i=0} ^{n-1}
{\vert q\vert^{  \binom i2 -(n-1) i}},
\end{align*}
where, to go from the next-to-last to the last line, we used again the
fact that the permutation achieving the maximum is the permutation
sending $i$ to $n-i-1$, $i=0,1,\dots,n-1$.
This implies the first claim of the proposition.

On the other hand, if $\la=0$, then we have
\begin{align*}
\vert V_n\vert&\le n!\max_{\tau\in \mathfrak S_n}
\prod _{i=0} ^{n-1}\vert  \tcD_iv_{i+\tau(i)}\vert\\
&\le n!\max_{\tau\in \mathfrak S_n}\prod _{i=0} ^{n-1}
\vert q\vert^{  - (i+\tau(i)) i}C_2^{i+\tau(i)+1}
\\
&\le \exp(O(n^2))
\prod _{i=0} ^{n-1}
{\vert q\vert^{  -(n-1) i}},
\end{align*}
implying the second claim of the proposition.
\end{proof}

\begin{remark}
\label{IRR3}
Using an analytic method for the (entire) generating series
$\sum_{n=0}^\infty v_nz^n$, Choulet \cite[Lemmas~3.3 and~3.4]{Ch}
proves estimates that may be informally summarized
in our settings as follows:
\begin{equation}
\ord_qV_n\ge\begin{cases}
\frac5{24}n^3+O(n^2) &\text{if $\lambda=0$}, \\
\frac16n^3+O(n^2) &\text{if $\lambda\ne0$},
\end{cases}
\qquad\text{as}\quad n\to\infty,
\label{e28}
\end{equation}
and
\begin{equation}
\vert V_n\vert\le\begin{cases}
\vert q\vert^{-\frac12n^3+O(n^2)} &\text{if $\lambda=0$}, \\
\vert q\vert^{-\frac13n^3+O(n^2)} &\text{if $\lambda\ne0$},
\end{cases}
\qquad\text{as}\quad n\to\infty.
\label{e29}
\end{equation}
Therefore, our Propositions~\ref{IRP1}--\ref{IRP2}
sharpen the estimate \eqref{e28} of Choulet, while our
Proposition~\ref{IRP3} provides a different proof of \eqref{e29}.
(Strictly speaking, Choulet did not arrive at the better estimate in~\eqref{e29}
for the case $\lambda=0$ himself; this had been done earlier by B\'ezivin~\cite{Be}
using elementary considerations.)
On the other hand, it is Choulet's method that suggested to us the form of
the difference operators~\eqref{e12} and~\eqref{e24}.
\end{remark}

\section{Cyclotomic factorization}
\label{s4}

We now turn to the general Hankel determinant~\eqref{e07} with the
sequence $(v_n)$ defined in \eqref{e08} and \eqref{e09}.
Let us fix the notation
$$
\Phi_l(q)=\underset{\gcd(j,l)=1}{\prod_{1\le j\le l}}(q-e^{2\pi j\sqrt{-1}/l}),
\qquad l=1,2,\dots,
$$
for the cyclotomic polynomials.

\begin{proposition}
\label{l0}
For any integer $l$ in the range $1\le l<n/2$, the Hankel determinant
$V_n=\det_{0\le i,j\le n-1}(v_{i+j})$, where the $v_n$'s are
given in~\eqref{e09} with $b_j(q)=b(q^j)$ for a polynomial $b(\,\cdot\,)$
of arbitrary degree, is divisible by $\Phi_l(q)^{e_l(n)}$, where
\begin{equation} \label{e35}
e_l(n)=\sum_{i=0}^{n-1}\left(\left\lfloor\frac {i+l}{3l}\right\rfloor+\left\lfloor
\frac {i} {3l}\right\rfloor\right).
\end{equation}
\end{proposition}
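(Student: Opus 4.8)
The plan is to recast the divisibility statement as a lower bound on an order of vanishing, and then to produce that vanishing by difference-operator row reductions in the spirit of Propositions~\ref{IRP1} and~\ref{IRP2}, but carried out at a root of unity rather than at $q=0$. First I would fix a primitive $l$-th root of unity $\zeta$. Since $\Phi_l(q)$ is irreducible over $\mathbb Q$ and $V_n$ is a polynomial in $q$ (its coefficients lying in the field generated by $\alpha$, $\mu$, and the coefficients of $b$), divisibility of $V_n$ by $\Phi_l(q)^{e_l(n)}$ is equivalent to the estimate $\ord_{q=\zeta}V_n\ge e_l(n)$; by Galois invariance the order is the same at every primitive $l$-th root, so one value of $\zeta$ suffices. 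Throughout I would treat $\alpha$, $\mu$ and the coefficients of $b$ as generic, so that orders of vanishing at $\zeta$ are well defined and the bound obtained is the universal one.

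The key structural input is that, eliminating the inhomogeneous term $\alpha^n$ between \eqref{e08} written for $n$ and for $n-1$, the sequence satisfies the exact second-order recurrence
\begin{equation*}
v_n=(b(q^n)+\alpha)\,v_{n-1}-\alpha\,b(q^{n-1})\,v_{n-2},
\end{equation*}
whose coefficients become periodic in $n$ with period $l$ \emph{modulo} $\Phi_l(q)$, because there $b(q^n)\equiv b(\zeta^n)$. A Floquet analysis of this periodic recurrence shows that, modulo $\Phi_l$, the sequence $(v_n)$ is a combination of two monodromy modes whose multipliers over one period are $B=\prod_{j=1}^l b(\zeta^j)$ and $\alpha^l$. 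Hence the constant-coefficient operator $(\mathcal I-B\cN^l)(\mathcal I-\alpha^l\cN^l)$ annihilates $(v_n)$ modulo $\Phi_l$; applied to a row it is an elementary (determinant-preserving) operation using rows at distances $0,l,2l$, and it carries each row into $\Phi_l(q)$ times an integral row. This accounts for a first power of $\Phi_l$, and it already matches \eqref{e35} at the start: since the operator needs rows back to $i-2l\ge0$, rows with $i<2l$ contribute nothing, while rows with $2l\le i<3l$ contribute exactly $1$ — precisely the values of $\lfloor i/3l\rfloor+\lfloor(i+l)/3l\rfloor$ on those ranges.

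To reach the full exponent $e_l(n)\sim n^2/(3l)$ I would upgrade these annihilators $\Phi_l$-adically. Away from $q=\zeta$ the coefficients $b(q^n)$ are \emph{not} periodic, so the residual annihilator kills $(v_n)$ only to first order. I would therefore attach to the $i$-th row a $q$-dependent difference operator $D_i=\sum_s c_s(q)\cN^s$, with $c_0(q)$ a unit at $\zeta$ and the remaining $c_s(q)$ Laurent polynomials in $q$ (so the operation is elementary and identical across columns), chosen to annihilate $(v_n)$ to successively higher order in $(q-\zeta)$, built from the Floquet data together with its $\Phi_l$-adic derivatives. Writing $d_{ij}=\ord_{q=\zeta}D_iv_{i+j}$, one then has $\ord_{q=\zeta}V_n\ge\min_\tau\sum_i d_{i,\tau(i)}$, and it remains to show that the reversal permutation $\tau(i)=n-1-i$ is optimal and that the minimum equals $e_l(n)$. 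The period $3l$ enters the bookkeeping because, once the confluent corrections are included, the two modes permit exactly two extra powers of $\Phi_l$ to be extracted per window of $3l$ consecutive rows, the two extraction events occurring at the offsets $0$ and $2l$ recorded by the two floor summands in \eqref{e35}.

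The main obstacle is exactly this confluent step. The Floquet annihilator governs only the residue modulo $\Phi_l$, whereas the quadratic exponent forces one to track how the two monodromy multipliers and their eigenvectors deform in the $(q-\zeta)$-direction, and to assemble the resulting higher-order annihilators into the $3l$-periodic pattern. Proving the entrywise estimates $\ord_{q=\zeta}D_iv_{i+j}\ge d_{ij}$ with the correct $d_{ij}$, and checking that the permutation minimum collapses to the floor-function count $e_l(n)$, is the crux; by comparison, the reduction to a single $\zeta$, the second-order recurrence, and the first-power annihilator are routine preliminaries.
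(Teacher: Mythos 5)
Your reduction to the order of vanishing at a single primitive $l$-th root of unity $\zeta$, the second-order homogeneous recurrence, and the observation that $(\mathcal I-B\mathcal N^l)(\mathcal I-\alpha^l\mathcal N^l)$ with $B=\prod_{k}b(\zeta^k)$ annihilates $(v_n)$ modulo $\Phi_l$ all agree with the paper (which gets the same annihilator directly from the inhomogeneous first-order recurrence, since the homogeneous part has multiplier $B$ over a period and the forcing term $\alpha^n$ is killed by $\mathcal I-\alpha^l\mathcal N^l$). But everything beyond the first power of $\Phi_l$ --- which is the entire content of the proposition, since $e_l(n)$ grows like $n^2/(3l)$ --- is left as an acknowledged ``crux'': you posit $q$-dependent operators $D_i$ annihilating $(v_n)$ to successively higher order in $q-\zeta$, built from deformations of the Floquet data, but you neither construct them nor prove the entrywise estimates $\ord_{q=\zeta}D_iv_{i+j}\ge d_{ij}$, nor verify that the permutation minimum collapses to \eqref{e35}. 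That is a genuine gap, not a routine verification. The paper's mechanism is different and avoids confluent perturbation theory altogether: it keeps the \emph{constant-coefficient} operators $\mathcal F=\mathcal I-B\mathcal N^l$ and $\mathcal G=\mathcal I-\alpha^l\mathcal N^l$ and shows, by a double induction exploiting that the $t$-th $q$-derivatives of the coefficient products $P_j(n,q)$ at $q=\zeta$ are $l$-quasi-polynomials in $n$ of degree at most $t$, that $\mathcal F^{2m+1}\mathcal G^{m+1}$ kills the $m$-th derivative $v_n^{(m)}$ for $n\ge(3m+2)l$; hence the fixed operator $\mathcal F^{2m-1}\mathcal G^{m}$ already produces $m$ full powers of $\Phi_l$ at the cost of reaching back $(3m-1)l$ indices. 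This is where the period $3l$ and the offset $l$ in $\lfloor(i+l)/(3l)\rfloor$ actually come from.

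There is also a quantitative problem with your bookkeeping. Row operations of this type yield, per row $i$, an exponent governed by how far back the operator may reach, namely $\lfloor(i+l)/(3l)\rfloor$; summed over rows this is only about $n^2/(6l)$, half of $e_l(n)$. Your claim that ``two extraction events per window of $3l$ rows, at offsets $0$ and $2l$'' can be harvested from the rows alone would require a single fixed finite-support operator $D_i$ to make $D_iv_{i+j}$ vanish at $\zeta$ to an order growing with $j$, which is asserted but not substantiated. The paper obtains the second summand $\lfloor j/(3l)\rfloor$ in \eqref{e35} by applying the operators $\mathcal F^{2m_j}\mathcal G^{m_j}$ to the \emph{columns} as well, after which every entry of the transformed matrix is divisible by $\Phi_l^{\,l_i+m_j}$ and every permutation contributes the same total $e_l(n)$, so no optimization over permutations is needed. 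To complete your argument you would either have to supply the higher-order annihilators and the $j$-dependent entrywise estimates, or, more economically, adopt the quasi-polynomial induction and the row-plus-column scheme of the paper.
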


\begin{remark}
{}From \eqref{e35}, it is straightforward to compute a compact
formula for $e_l(n)$, namely,
\begin{equation*}
e_l(n)=\frac{(n-j)(n+j-2l)}{3l}+\begin{cases}
0 & \text{if $0\le j<2l$}, \\
j-2l & \text{if $2l\le j<3l$},
\end{cases}
\quad\text{where}\
n\equiv j\pmod{3l}.
\end{equation*}
(In other words, $j/(3l)$ is the fractional part of $n/(3l)$.)
In particular,
\begin{equation*}
e_1(n)=\biggl\lfloor\frac{(n-1)^2}3\biggr\rfloor
=\begin{cases}
\dfrac{(n-1)^2}3 & \text{if $n\equiv1\pmod3$}, \\[2.3mm]
\dfrac{n(n-2)}3 & \text{otherwise},
\end{cases}
\qquad
e_2(n)=\biggl\lfloor\frac{(n-2)^2}6\biggr\rfloor.
\end{equation*}
\end{remark}

\begin{proof}
We have to do some preparatory work first, before we are in the
position to embark on the ``actual" proof of the divisibility
assertion in the proposition. The central part of this preparatory
work is the identity \eqref{30}.

Changing notation slightly, recall that, for $n\ge1$, we have
$$v_n=b(q^n)v_{n-1}-\alpha^n,$$
with $b(x)=x-\la$. (In fact, the subsequent arguments hold for {\it
any} function $b(x)$. It is therefore that we shall write $b(\,.\,)$
in the sequel instead of its explicit form which is of relevance in our
context.) Hence, for $n\ge l\ge1$, we have
\begin{equation}\label{22}
    v_n=v_{n-l}\prod_{k=0}^{l-1}b(q^{n-k})-\sum_{j=0}^{l-1}\alpha^{n-j}\prod_{k=0}^{j-1}b(q^{n-k}).
\end{equation}
Writing
\begin{equation*}
    P_j(n,q)=\prod_{k=0}^{j-1}b(q^{n-k})
\end{equation*}
for non-negative integers $j$, the recurrence \eqref{22} takes the form
\begin{equation}\label{24}
    v_n=v_{n-l}P_l(n,q)-\sum_{j=0}^{l-1}\alpha^{n-j}P_j(n,q).
\end{equation}

Fix a positive integer $l$ and a primitive $l$-th root of unity $\zeta$.
For integers $j,t,n$ with $j,t\ge0$, set
\begin{equation}\label{25}
    P_j^{(t)}(n)=\frac{d^t}{dq^t}P_j(n,q)\Bigr|_{q=\zeta}.
\end{equation}
In particular, we have
\begin{equation}\label{26}
    P_0^{(t)}(n)=\begin{cases}1 & \text{if $t=0$}, \\
                     0 & \text{if $t\ge1$}.
                 \end{cases}
\end{equation}
For $j\ge1$, we have
\begin{equation}\label{27}
    \frac{d^t}{dq^t}P_j(n,q)=\sum_{t_0+\dots+t_{j-1}=t}\frac{t!}{t_0!\dotsb
    t_{j-1}!}\prod_{k=0}^{j-1}\frac{d^{t_k}}{dq^{t_k}}b(q^{n-k}).
\end{equation}


Applying the Fa\`a di Bruno formula (cf.\ \cite[Sec.~3.4]{ComtAA};
but see also \cite{CraiAA,JohWAE}) we get
\begin{equation}\label{28}
    \frac{d^{t}}{dq^{t}}b(q^{n-k})
    =\sum_{m_1+2m_2+\dots+tm_t=t}\frac{t!}{m_1!\dotsb m_t!}b^{(m)}(q^{n-k})
    \prod_{\nu=1}^t\left(\binom{n-k}\nu q^{n-k-\nu}\right)^{m_\nu},
\end{equation}
where $m=m_1+\dots+m_k$.

It is straightforward to see that Equations
\eqref{25}--\eqref{28} imply that, for any non-negative integers $j$ and $t$,
the quantity $P_j^{(t)}(n)$, as a function in $n$, is an
$l$-quasi-polynomial of degree at most $t$, where
an $l$-\emph{quasi-polynomial of degree at most\/} $t$ is
a sequence of complex numbers $(Q(n))_{n\in\ZZ}$ of the form
\begin{equation*}
    Q(n)=\sum_{\nu=0}^ta_\nu(n)n^\nu,
\end{equation*}
the sequence $\{a_\nu(n)\}_{n\in\ZZ}$ being
$l$-periodic for each~$\nu$ (cf.\ \cite[Sec.~4.4]{StanAP}).
We denote the set of all
$l$-quasi-polynomials of degree at most~$t$ by $\mathcal Q_t=\mathcal Q_t(l)$.
Furthermore note that if $Q(n)\in\mathcal Q_t$, with $t>0$, then
$Q(n)-Q(n-l)\in\mathcal Q_{t-1}$.
These facts will be used repeatedly.

Our next observation is that the sequence $(P_l^{(0)}(n))_{n\in\ZZ}$
is constant, where\break
$    P_l^{(0)}(n)=P_l^{(0)}(0)=\prod_{k=0}^{l-1}b(\zeta^{k})$.
We let
\begin{equation*}
    B=B_l=\prod_{k=0}^{l-1}b(\zeta^{k}).
\end{equation*}
Clearly, $B$~is independent of the particular choice of the
primitive $l$-th root of unity $\zeta$.

Now we introduce the difference operators
\begin{equation*}
    \mathcal F=\mathcal I-B\mathcal N^l
\end{equation*}
and
\begin{gather*}
    \mathcal G=\mathcal I-\alpha^l\mathcal N^l,
\end{gather*}
where $\mathcal I$ and $\mathcal N$ have the same meaning as earlier.
Clearly, the operators $\mathcal F$ and $\mathcal G$ commute.

It is easy to check, using elementary facts of difference calculus that,
for any $Q(n)\in\mathcal Q_t$, we have
\begin{equation} \label{eq:G}
    \mathcal G^{t+1}\bigl(Q(n)\alpha^n\bigr)=0.
\end{equation}

For non-negative integers $m$ and $n$, let
\begin{equation*}
    v_n^{(m)}=\frac{\partial^m v_n}{\partial
    q^m}\Bigr|_{q=\zeta}.
\end{equation*}
By differentiating both sides of \eqref{24} $m$~times, and
by subsequently substituting $q=\zeta$,
we obtain for $n\ge l\ge1$ the equation
\begin{equation}\label{29}
    v_n^{(m)}=\sum_{\nu=0}^m\binom m\nu
    P_l^{(m-\nu)}(n)v_{n-l}^{(\nu)}-\sum_{j=0}^{l-1}P_j^{(m)}(n)\alpha^{n-j}.
\end{equation}

We now claim that for an arbitrary $Q(n)\in\mathcal Q_t$, for
non-negative integers $t$ and $m$, and for any integer $n\ge(2t+3m+2)l$, we have
\begin{equation}\label{30}
    \mathcal F^{t+2m+1}\mathcal G^{t+m+1}\bigl(Q(n)v_n^{(m)}\bigr)=0.
\end{equation}
We prove this claim by a double induction: the external induction is over $m$, while
the inner induction is over~$t$.

We start by proving \eqref{30} for $m=0$, by doing an induction
over $t$.
We put $m=0$ in \eqref{29}, and rewrite the resulting equation in the form
\begin{equation*}
    \mathcal F(v_n^{(0)})=-\sum_{j=0}^{l-1}P_j^{(0)}(n)\alpha^{n-j}.
\end{equation*}
We apply the operator $\mathcal G$ on both sides. By \eqref{eq:G},
this implies
\begin{equation*}
    \mathcal F\mathcal G(v_n^{(0)})=0,
\end{equation*}
which in turn implies
\begin{equation*}
    \mathcal F\mathcal G(Q(n)v_n^{(0)})=0
\end{equation*}
for any $l$-periodic function $Q(n)$, since the operators $\mathcal F$
and $\mathcal G$ are both polynomials in $\mathcal N^l$.
This is exactly \eqref{30} for $m=t=0$.

Now let us assume that \eqref{30} is proved for $m=0$ and for $t-1$
instead of $t$.
If $Q(n)\in\mathcal Q_t$ with $t>0$, then \eqref{29} implies
\begin{equation*}
    \mathcal
    F\bigl(Q(n)v_n^{(0)}\bigr)=B(Q(n)-Q(n-l))v_{n-l}^{(0)}-\sum_{j=0}^{l-1}Q(n)P_j^{(0)}(n)\alpha^{n-j}.
\end{equation*}
After application of $\mathcal F^t\mathcal G^{t+1}$ on both sides, we obtain
\begin{multline*}
    \mathcal
    F^{t+1}\mathcal G^{t+1}\bigl(Q(n)v_n^{(0)}\bigr)
=B\mathcal G\mathcal F^t\mathcal G^{t}\bigl((Q(n)-Q(n-l))v_{n-l}^{(0)}\bigr)\\
-\sum_{j=0}^{l-1}\mathcal F^t\mathcal G^{t+1}\bigl(Q(n)P_j^{(0)}(n)\alpha^{n-j}\bigr).
\end{multline*}
The summands in the sum over $j$ vanish because of \eqref{eq:G}, while
the first expression on the right-hand side vanishes because of the
induction hypothesis. (Recall that $Q(n)-Q(n-l)\in \mathcal Q_{t-1}$.)
This proves \eqref{30} for $m=0$ and
arbitrary $t$.

Now we assume that \eqref{30} is proved for $0,1,\dots,m-1$
instead of $m$ and arbitrary $t$.
For $m>0$, we write~\eqref{29} as
\begin{equation*}
    \mathcal Fv_n^{(m)}=\sum_{\nu=0}^{m-1}\binom m\nu
    P_l^{(m-\nu)}(n)v_{n-l}^{(\nu)}-\sum_{j=0}^{l-1}P_j^{(m)}(n)\alpha^{n-j}.
\end{equation*}
We apply $\mathcal F^{2m}\mathcal G^{m+1}$ on both sides, to obtain
\begin{multline*}
    \mathcal F^{2m+1}\mathcal G^{m+1}(v_n^{(m)})
=\sum_{\nu=0}^{m-1}\binom m\nu
   \mathcal F^{m-\nu-1}
\mathcal F^{(m-\nu)+2\nu+1}\mathcal G^{(m-\nu)+\nu+1}\bigl(
P_l^{(m-\nu)}(n)v_{n-l}^{(\nu)}\bigr)\\
-\sum_{j=0}^{l-1}\mathcal F^{2m}\mathcal
G^{m+1}\bigl(P_j^{(m)}(n)\alpha^{n-j}\bigr).
\end{multline*}
Again, the summands in the sum over $j$ vanish because of \eqref{eq:G}, while
the first expression on the right-hand side vanishes because of the
induction hypothesis. This establishes \eqref{30} for $m$ and
$t=0$.

In order to prove \eqref{30} for $m$ and {\it arbitrary} $t$, we do
again an induction over $t$. We already know that \eqref{30} is true
for $t=0$. Let us assume that \eqref{30} is true for $t-1$ instead of $t$.
We multiply both sides of \eqref{29} by $Q(n)$, and we apply $\mathcal
F$ on both sides. The resulting equation can then be written in the form
\begin{multline*}
    \mathcal
F\bigl(Q(n)v_n^{(m)}\bigr)=B(Q(n)-Q(n-l))v_{n-l}^{(m)}
+\sum_{\nu=0}^{m-1}\binom m\nu
    Q(n)P_l^{(m-\nu)}(n)v_{n-l}^{(\nu)}\\
    -\sum_{j=0}^{l-1}Q(n)P_j^{(m)}(n)\alpha^{n-j}.
\end{multline*}
After application of $\mathcal F^{t+2m}\mathcal G^{t+m+1}$ on
both sides, we arrive at
\begin{multline*}
    \mathcal F^{t+2m+1}\mathcal
G^{t+m+1}\bigl(Q(n)v_n^{(m)}\bigr)=
B\mathcal G\mathcal F^{t+2m}\mathcal
G^{t+m}\bigl((Q(n)-Q(n-l))v_{n-l}^{(m)}\bigr)\\
+\sum_{\nu=0}^{m-1}\binom m\nu
  \mathcal F^{m-\nu-1}\mathcal F^{ (t+m-\nu)+2\nu+1}\mathcal
G^{(t+m-\nu)+\nu+1}\bigl(
Q(n)P_l^{(m-\nu)}(n)v_{n-l}^{(\nu)}\bigr)\\
    -\sum_{j=0}^{l-1}\mathcal F^{t+2m}\mathcal
G^{t+m+1}\bigl(Q(n)P_j^{(m)}(n)\alpha^{n-j}\bigr).
\end{multline*}
Again, by \eqref{eq:G} and the induction hypothesis, the right-hand
side in this identity vanishes.
Thus, \eqref{30} is completely proved.

\medskip
We are now ready to treat the Hankel determinant $V_n$.
In fact, we need \eqref{30} only for $t=0$. (For the proof, it was
however necessary to play with $t$.)
What \eqref{30} for $t=0$ says is
that, for $m\ge1$ and $n\ge(3m-1)l$, the polynomial (in~$q$) $w_{m,n}=(\mathcal
I-B\mathcal N^l)^{2m-1}(\mathcal I-\alpha^l\mathcal N^l)^{m}v_n$
satisfies
$$\left.\frac{\partial^jw_{m,n}}{\partial q^j}\right|_{q=\zeta}=0,\qquad0\le j\le m-1,$$
and, for any choice of the primitive $l$-th root of unity~$\zeta$.
Hence, we have
\begin{equation} \label{eq:Phidiv}
\Phi_l(q)^{m}\mid w_{m,n}.
\end{equation}
This reasoning also shows that for the polynomial
$$
\wt w_{m,n}
=(\mathcal I-B\mathcal N^l)^{2m}(\mathcal I-\alpha^l\mathcal N^l)^{m}v_n
=(\mathcal I-B\mathcal N^l)w_{m,n}
=w_{m,n}-Bw_{m,n-l}
$$
we have
\begin{equation} \label{eq:Phidiv2}
\Phi_l(q)^{m}\mid\wt w_{m,n},
\end{equation}
as long as $n\ge 3ml$.

For $n-1\ge i\ge 2l$ we apply the operator
$(\mathcal I-B\mathcal N^l)^{2l_i-1}(\mathcal I-\alpha^l\mathcal
N^l)^{l_i}$, where $l_i=\lfloor(i+l)/(3l)\rfloor$ to the $i$-th row
of the Hankel matrix $(v_{i+j})_{0\le i,j\le n-1}$,
and subsequently, for $n-1\ge j\ge3l$, we apply the operator
$(\mathcal I-B\mathcal N^l)^{2m_j}(\mathcal I-\alpha^l\mathcal
N^l)^{m_j}$, where $m_j=\lfloor j/(3l)\rfloor$ to the $j$-th column.
The resulting matrix has entries
$$
\begin{cases}
\wt w_{\ep(i,j),i+j} & \text{if $i<2l$ and $j\ge3l$}, \\
w_{\ep(i,j),i+j} & \text{otherwise},
\end{cases}
$$
where
$$\ep(i,j)=l_i+m_j=\lfloor(i+l)/(3l)\rfloor+\lfloor j/(3l)\rfloor,$$
with the convention that $w_{0,n}=v_n$.
As earlier, since the above operations correspond to row and column
operations, the determinant of the resulting matrix is still equal to~$V_n$.

In view of \eqref{eq:Phidiv} and \eqref{eq:Phidiv2},
it remains to observe that, for an arbitrary permutation~$\tau$,
$$
\sum_{i=0}^{n-1}\ep(i,\tau(i))=
\sum_{i=0}^{n-1}\left(\left\lfloor\frac {i+l}{3l}\right\rfloor+\left\lfloor
\frac {\tau(i)} {3l}\right\rfloor\right)\\
=\sum_{i=0}^{n-1}\left(\left\lfloor\frac {i+l}{3l}\right\rfloor+\left\lfloor
\frac {i} {3l}\right\rfloor\right)
=e_l(n),
$$
This completes the proof of the proposition.
\end{proof}


Propositions \ref{IRP1}, \ref{IRP2} and~\ref{l0} may be summarized
as follows: The Hankel determinant~\eqref{e07} of the sequence~\eqref{e11}
admits the factorization
\begin{equation}
\label{e31}
V_n=\Delta_n(q)\cdot\wt V_n,
\end{equation}
where
\begin{equation}
\label{e32}
\Delta_n(q)
=q^{e_0(n)}\prod_{l\ge1}\Phi_l(q)^{e_l(n)}
\end{equation}
and the exponents $e_0(n),e_1(n),e_2(n),\dots$ are given by very simple formulas.
Since
$$
\frac{\log|\Phi_l(q)|}{\log|q|}=\varphi(l)+O(1)
\quad\text{as }l\to\infty
$$
and
$$
e_l(n)=O\biggl(\frac{n^2}l\biggr)\quad\text{as $n\to\infty$ uniformly in $l\ge1$},
$$
the asymptotic behaviour of $\Delta_n(q)$ as $n\to\infty$ is governed by the degree of the polynomial,
\begin{equation} \label{eq:logDelta}
\frac{\log|\Delta_n(q)|}{\log|q|}
\sim\deg\Delta_n=e_0(n)+\sum_{l=1}^\infty e_l(n)\varphi(l)
\quad\text{as}\; n\to\infty.
\end{equation}

The following lemma enables us to determine the asymptotic behaviour of the sum on
the right-hand side of \eqref{eq:logDelta} as $n\to\infty$.

\begin{lemma} \label{eq:sumel}
Let $a$ and $c$ be real numbers with $0\le c<a$.
Then, as $n\to\infty$,
$$\sum_{l\ge1}\varphi(l)\sum_{i=0}^{n}\left\lfloor\frac{i+cl}{al}
\right\rfloor=
\frac{n^3}{\pi^2}
\sum_{m=1}^{\infty}\frac1{(am-c)^2}+O(n^2\log^2n).
$$
\end{lemma}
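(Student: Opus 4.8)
The plan is to turn the double sum into a count of lattice points, swap the order of summation so that the outer variable becomes exactly the index $m$ appearing in the target, and then feed in the standard asymptotics for the summatory functions of Euler's totient. The miracle that produces the stated constant is a clean cancellation $\tfrac3{\pi^2}-\tfrac2{\pi^2}=\tfrac1{\pi^2}$; everything else is bookkeeping of error terms.

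First I would rewrite the inner sum as a lattice count. Since $a>c\ge0$ and $l\ge1$, $i\ge0$, every floor argument is non-negative, so $\fl{(i+cl)/(al)}=\#\{m\ge1:aml\le i+cl\}=\#\{m\ge1:i\ge(am-c)l\}$. Summing over $i$ and interchanging the order of the (for fixed $n$, finite) sums gives
$$\sum_{i=0}^n\fl{\frac{i+cl}{al}}=\sum_{m\ge1}\#\{i\in\ZZ:(am-c)l\le i\le n\}=\sum_{m\ge1}\max\bigl(0,\,n-\cl{(am-c)l}+1\bigr).$$
Writing $b_m=am-c>0$ and interchanging the finite sums over $l$ and $m$, the whole quantity becomes
$$\sum_{m\ge1}\;\sum_{l\le n/b_m}\varphi(l)\bigl(n-\cl{b_m l}+1\bigr),$$
the constraint $l\le n/b_m$ coming from the condition $\cl{b_m l}\le n$ that makes the $\max$ positive.

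Next I would strip off the ceiling. Since $\cl{b_m l}-b_m l\in[0,1)$, replacing $\cl{b_m l}$ by $b_m l$ (and absorbing the ``$+1$'') costs, for each $m$, at most $\sum_{l\le n/b_m}\varphi(l)=O\bigl((n/b_m)^2\bigr)$; summed over $m$ this is $O\bigl(n^2\sum_m b_m^{-2}\bigr)=O(n^2)$ because $\sum_m b_m^{-2}$ converges. For the surviving sum $\sum_{l\le n/b_m}\varphi(l)(n-b_ml)$ I would insert the classical estimates
$$\Sigma_0(X):=\sum_{l\le X}\varphi(l)=\tfrac3{\pi^2}X^2+O(X\log 2X),\qquad\Sigma_1(X):=\sum_{l\le X}\varphi(l)\,l=\tfrac2{\pi^2}X^3+O(X^2\log 2X),$$
the second following from the first by partial summation. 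Evaluating $n\Sigma_0(X)-b_m\Sigma_1(X)$ at $X=n/b_m$, the two $n^3$-contributions combine as $\tfrac3{\pi^2}-\tfrac2{\pi^2}=\tfrac1{\pi^2}$, leaving the main term $n^3/(\pi^2 b_m^2)$ for each $m$, with an error of size $O\bigl((n^2/b_m)\log n\bigr)$.

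Finally I would collect everything. Summing the main terms over the relevant range $b_m\le n$ and then completing the sum to all $m\ge1$ introduces a tail $\tfrac{n^3}{\pi^2}\sum_{b_m>n}b_m^{-2}=O(n^2)$. The accumulated error from the totient asymptotics is $O\bigl(n^2\log n\sum_{b_m\le n}b_m^{-1}\bigr)=O(n^2\log^2 n)$, since $\sum_{b_m\le n}b_m^{-1}\ll\sum_{m\le n/a}m^{-1}=O(\log n)$. Adding the $O(n^2)$ corrections from the previous step yields $\tfrac{n^3}{\pi^2}\sum_{m\ge1}(am-c)^{-2}+O(n^2\log^2 n)$, as claimed. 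The one genuinely delicate point is keeping the error estimates uniform in $m$ — in particular making $\Sigma_0,\Sigma_1$ usable all the way down to $X$ near $1$ (handled by the $\log 2X$ form) and verifying that their accumulation over the $O(n\log n)$ contributing pairs $(l,m)$ does not exceed the $O(n^2\log^2 n)$ budget; by contrast the leading cancellation $3-2=1$ is immediate.
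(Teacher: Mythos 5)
Your proof is correct, but it organizes the triple sum over $(i,l,m)$ differently from the paper. You unfold the floor as a count over $m$, then perform the $i$-summation \emph{first} (a lattice-point count giving $\max(0,\,n-\lceil(am-c)l\rceil+1)$ for each fixed pair $(l,m)$), so that for each $m$ you are left with the weighted sums $\sum_{l\le X}\varphi(l)$ \emph{and} $\sum_{l\le X}l\varphi(l)$ at $X=n/(am-c)$; the constant $\tfrac1{\pi^2}$ then emerges from the cancellation $\tfrac3{\pi^2}-\tfrac2{\pi^2}$, and you need the second summatory asymptotic $\Sigma_1(X)=\tfrac2{\pi^2}X^3+O(X^2\log 2X)$ (obtained by partial summation) in addition to Mertens' formula. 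The paper instead keeps $i$ fixed to the end: it converts the sum over $l$ into the telescoping sum $\sum_{m\ge1}\Sigma\bigl(i/(am-c)\bigr)$ with $\Sigma(x)=\sum_{l\le x}\varphi(l)$, applies only Mertens' formula, and recovers the constant as $\tfrac3{\pi^2}\cdot\tfrac13$ from $\sum_{i\le n}i^2\sim n^3/3$. The paper's route is slightly leaner (one summatory function, no ceiling bookkeeping), while yours makes the geometric content — counting weighted lattice points under the lines $i=(am-c)l$ — more explicit; your error analysis (the $O(n^2)$ from stripping ceilings, the $O(n^2\log^2n)$ from $\sum_{b_m\le n}b_m^{-1}=O(\log n)$, and the $O(n^2)$ tail completion) is carried out correctly and lands on the same bound.
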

\begin{proof}
By interchanging summations, we have
\begin{equation} \label{eq:sumsum}
\sum_{l=1}^{\infty}\varphi(l)\sum_{i=0}^{n}\left\lfloor\frac{i+cl}{al}
\right\rfloor=
\sum_{i=0}^{n}\sum_{l=1}^{\infty}\left\lfloor\frac{i+cl}{al}\right\rfloor
\varphi(l).
\end{equation}
Writing
$\Sigma(x)=\sum_{l\le x}\varphi(l)$,
the expression \eqref{eq:sumsum} can be rewritten in the form
$$
\sum_{i=0}^{n}\sum_{m=1}^{\infty}
m\left(\Sigma\left(\frac i{am-c}\right)
-\Sigma\left(\frac i{a(m+1)-c}\right)\right)
=
\sum_{i=0}^{n}\sum_{m=1}^{\infty}
\Sigma\left(\frac i{am-c}\right).
$$
Using Mertens' classical asymptotic formula
(cf.\ \cite[p.~268, Theorem~330]{HW})
$$\Sigma(x)=\frac{3x^2}{\pi^2}+O(x\log x)
\qquad \text {as }x\to\infty,$$
we obtain the following asymptotic estimate for \eqref{eq:sumsum}:
\begin{multline*}
\sum_{i=\cl{a-c}}^{ n}\sum_{m=1}^{\fl{(i+c)/a}}
\left(\frac{3i^2}{\pi^2(am-c)^2}+O\left(\frac{i\log
n}m\right)\right)+O(1)\\
=\sum_{i=\cl{a-c}}^{ n}\left(\frac{3i^2}{\pi^2}
\sum_{m=1}^{\infty}\frac1{(am-c)^2}+O(i\log^2n)\right)+O(1),
\end{multline*}
which immediately implies the assertion of the lemma.
\end{proof}

Using Formula~\eqref{e35} for $e_l(n)$ and
the asymptotics from Lemma~\ref{eq:sumel}, we obtain
\begin{equation}
\sum_{l=1}^\infty e_l(n)\varphi(l)
\approx 0.05301135n^3
\quad\text{as}\ n\to\infty,
\label{e34}
\end{equation}
where the exact value of the constant in~\eqref{e34} is
\begin{equation*}
\frac1{54}+\frac1{\pi^2}\sum_{m=1}^\infty\frac1{(3m-1)^2}
=\frac5{54}-\frac{\Im\Li_2(e^{2\pi \sqrt{-1}/3})}{\pi^2\sqrt3}.
\end{equation*}

Summarizing our findings from Propositions~\ref{IRP1} and \ref{IRP2},
as well as from \eqref{eq:logDelta} and \eqref{e34},
we have the following result for the asymptotic degree of
$\Delta_n(q)$.

\begin{proposition} \label{IRP5}
Let $\Delta_n(q)$ be defined as in \eqref{e32}.
Then, as $n\to\infty$, we have
$$
\frac{\log|\Delta_n(q)|}{\log|q|}
\sim \deg_q \Delta_n(q)\sim Bn^3,
$$
where
$$B=\frac5{54}-\frac{\Im\Li_2(e^{2\pi \sqrt{-1}/3})}{\pi^2\sqrt3}+
\begin{cases} \dfrac {5} {24}&\text {if }\la=0,\\[2mm]
\dfrac {1} {6}&\text {if }\la\ne0.
\end{cases}
$$

\end{proposition}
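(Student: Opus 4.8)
The plan is to assemble the pieces that the preceding results have already provided, the proposition being essentially a bookkeeping statement. By the definition \eqref{e32}, $\Delta_n(q)=q^{e_0(n)}\prod_{l\ge1}\Phi_l(q)^{e_l(n)}$, and since $\deg_q\Phi_l=\varphi(l)$ we have
$$
\deg_q\Delta_n(q)=e_0(n)+\sum_{l\ge1}e_l(n)\varphi(l).
$$
Because \eqref{eq:logDelta} already establishes that $\log|\Delta_n(q)|/\log|q|\sim\deg_q\Delta_n(q)$ as $n\to\infty$, it suffices to show that the right-hand side above is asymptotic to $Bn^3$ with the stated constant~$B$.

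First I would extract the leading term of $e_0(n)$. For $\lambda\ne0$, Proposition~\ref{IRP1} gives $e_0(n)=\binom n3\sim\frac16n^3$; for $\lambda=0$, Proposition~\ref{IRP2} gives $e_0(n)\sim\frac5{24}n^3$ (from either branch of \eqref{e19}). This is precisely the source of the case distinction between $\frac16$ and $\frac5{24}$ in the formula for~$B$.

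Next I would evaluate the cyclotomic contribution $\sum_{l\ge1}e_l(n)\varphi(l)$. Inserting the explicit formula \eqref{e35} and splitting the sum according to its two floor functions, I obtain two sums to which Lemma~\ref{eq:sumel} applies with $a=3$ and $c=1$, respectively $a=3$ and $c=0$. Their leading coefficients are $\frac1{\pi^2}\sum_{m\ge1}(3m-1)^{-2}$ and $\frac1{\pi^2}\sum_{m\ge1}(3m)^{-2}=\frac1{54}$, whose sum is recorded at \eqref{e34} to equal $\frac5{54}-\Im\Li_2(e^{2\pi\sqrt{-1}/3})/(\pi^2\sqrt3)$ after the standard dilogarithm evaluation. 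Adding this to the contribution of $e_0(n)$ yields the asserted value of~$B$ in both cases.

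There is no serious obstacle here: the analytic estimate \eqref{eq:logDelta}, the determinantal computations of Propositions~\ref{IRP1} and~\ref{IRP2}, and the number-theoretic asymptotics of Lemma~\ref{eq:sumel} have already done all the work. The only point deserving a moment's care is that \eqref{e35} sums over $0\le i\le n-1$ whereas Lemma~\ref{eq:sumel} sums over $0\le i\le n$; this discrepancy alters the cubic coefficient by $O(n^2)$ only and is therefore harmless, being swallowed by the error term $O(n^2\log^2n)$.
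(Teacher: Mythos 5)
Your proposal is correct and follows essentially the same route as the paper: the paper "proves" Proposition~\ref{IRP5} precisely by combining \eqref{eq:logDelta}, the leading terms of $e_0(n)$ from Propositions~\ref{IRP1} and~\ref{IRP2}, and the two applications of Lemma~\ref{eq:sumel} (with $a=3$, $c=1$ and $a=3$, $c=0$) recorded in \eqref{e34}. Your remark about the harmless shift in the summation range ($0\le i\le n-1$ versus $0\le i\le n$) is a correct and welcome point of care.
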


\begin{remark}
\label{cyclR}
If $\lambda$~is a root of unity, expected formulas for the exponents
of the cyclotomic factors of $V_n$
obey a different law. To write them down in the ($q$-exponential) case $\lambda=1$,
we represent the polynomial $\Delta_n$ from the factorization~\eqref{e31} in
the form
\begin{equation*}
\Delta_n
=q^{e_0(n)}\prod_{l\ge1}(q^l-1)^{\wt e_l(n)}.
\end{equation*}
Then
\begin{equation}
\label{e37}
\wt e_l(n)
=2\max\{0,n-2l\},
\end{equation}
which, together with~\eqref{e15}, implies that, in the case
$\lambda=1$, we have
\begin{equation*}
\deg\Delta_n=\begin{cases}
\dfrac{n(n-1)^2}4 & \text{if $n$ is odd}, \\[2.3mm]
\dfrac{n^2(n-2)}4 & \text{if $n$ is even}.
\end{cases}
\end{equation*}
It is of definite interest to prove also these formulas for the cyclotomic exponents.
\end{remark}

\section{Arithmetic ingredients}
\label{s5}

In this section we provide the proof of Theorems~\ref{t1} and
\ref{t2}. It rests on Propositions~\ref{IRP3}--\ref{IRP5},
and two additional auxiliary results, given in Lemmas~\ref{lem:Kron}
and \ref{lem:conj} below. The first one says that, under
certain arithmetic constraints on the complex parameters
$\alpha$, $\lambda$ and~$q$, if we generalize
our sequence $v_n$ to $v_n(x)$, where $v_0(x)=x-1$ and
\begin{equation}
v_n(x)=v_{n-1}(x)\cdot(q^n-\lambda)-\alpha^n
\qquad\text{for}\quad n=1,2,\dots,
\label{e52}
\end{equation}
then the corresponding Hankel determinant
\begin{equation}
V_n(x)=\det_{0\le i,j\le n-1}\bigl(v_{i+j}(x)\bigr)
\label{e51}
\end{equation}
is non-zero infinitely often, while the second
establishes a (crude) asymptotic upper bound for it.
The reader should note that $v_n(x)$ becomes our previous $v_n$ defined in
\eqref{e11} if $x=\mu$, where $\mu$ is given by \eqref{eq:mu}.
Hence, if $x=\mu$, the Hankel determinant $V_n(x)$ becomes
our earlier Hankel determinant $V_n$.

\begin{lemma} \label{lem:Kron}
Let $\al,\la,q,x$ be complex numbers with $\al\ne0$,
$\la\notin q^{\mathbb Z_{>0}}$, and
$\alpha\notin-\lambda q^{\mathbb Z_{>0}}$.
Then there are infinitely many positive integers
$n$ such that $V_n(x)\ne0$.
\end{lemma}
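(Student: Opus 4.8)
The plan is to show that the Hankel determinants $V_n(x)$ cannot all vanish from some point on by exploiting the fact that $V_n(x)$ is a \emph{polynomial} in $q$ whose leading behaviour (lowest $q$-order term) we have already computed. First I would recall from Propositions~\ref{IRP1} and \ref{IRP2} that, for the genuine sequence $v_n$ (equivalently, for $V_n(x)$ viewed as a polynomial in the auxiliary variables), the coefficient of the minimal power $q^{e_0(n)}$ is an explicit nonzero polynomial in $\alpha$, $\lambda$, and $\mu$ (respectively $\alpha$ and $\mu$). The key point is that $V_n(x)$ is the \emph{same} polynomial in $q,\alpha,\lambda$ with $\mu$ replaced by the free parameter $x$; the recursion \eqref{e52} differs from \eqref{e11} only through the initial value, so the leading coefficient is obtained from those propositions by substituting $x$ for $\mu$.

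Concretely, in the case $\lambda\ne0$, Proposition~\ref{IRP1} gives the $q^{e_0(n)}$-coefficient as a product of powers of $\alpha$, $\lambda$, and the linear form $\bigl(\lambda-(\lambda+\alpha)x\bigr)$ (with an extra factor $(x-1)$ when $n$ is odd). Under the hypotheses $\alpha\ne0$ and $\alpha\notin-\lambda q^{\mathbb Z_{>0}}$, the only way this coefficient can vanish is if $x$ hits one of finitely many exceptional values: $x=\lambda/(\lambda+\alpha)$ (which forces the linear form to vanish for every $n$) or $x=1$ (killing only the odd-$n$ factor). In the case $\lambda=0$, Proposition~\ref{IRP2} gives the coefficient as a power of $\alpha$ times either $x^{n/2}$ (even $n$) or $K_{(n-1)/2}(\alpha,x)$ (odd $n$); since $\alpha\ne0$, vanishing for even $n$ forces $x=0$, while for odd $n$ the nonvanishing of $K_{(n-1)/2}$ for generic $x$ follows from the remark after Proposition~\ref{IRP2} (it is nonzero at $\alpha=0$, $x\ne1$, and satisfies an explicit linear recurrence).

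The argument I would then run is the following. Fix the given complex $x$. If $x$ avoids the exceptional value $\lambda/(\lambda+\alpha)$ (resp.\ $x\ne0$ when $\lambda=0$), then the $q^{e_0(n)}$-coefficient of $V_n(x)$ is nonzero for all even $n$, so $V_n(x)$ is a nonzero polynomial in $q$, hence nonzero for our specific $q$ provided $q$ is not among its finitely many roots---and since this holds for infinitely many $n$ we still obtain infinitely many nonvanishing $V_n(x)$. The remaining, degenerate case is $x=\lambda/(\lambda+\alpha)$ (resp.\ $x=0$), where the even-$n$ leading term collapses; here I would instead use the \emph{odd}-$n$ formula, whose leading coefficient carries the separate factor $(x-1)$ (resp.\ $K_{(n-1)/2}$). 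One checks that the degenerate value of $x$ does not simultaneously annihilate the odd-$n$ coefficient: for $\lambda\ne0$, $x=\lambda/(\lambda+\alpha)=1$ would force $\alpha=0$, contrary to hypothesis, so $x\ne1$ and the odd-$n$ term survives; for $\lambda=0$, one verifies $K_m(\alpha,0)\ne0$ directly from the recurrence. In every case one obtains infinitely many $n$ with $V_n(x)\not\equiv0$ as a polynomial in $q$, and excluding the finitely many bad $q$ for each such $n$ yields $V_n(x)\ne0$ infinitely often.

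The main obstacle will be the bookkeeping in the degenerate cases: one must verify that the specific exceptional value of $x$ that kills the even-$n$ leading coefficient does \emph{not} also kill the odd-$n$ leading coefficient, which is exactly where the hypotheses $\alpha\ne0$ and $\alpha\notin-\lambda q^{\mathbb Z_{>0}}$ (together with $\lambda\notin q^{\mathbb Z_{>0}}$) are used. A secondary technical point is that passing from ``nonzero polynomial in $q$'' to ``nonzero at our fixed $q$'' costs only finitely many excluded values of $q$ per $n$; since we have infinitely many usable $n$, this causes no difficulty, but it should be stated carefully so that the conclusion is genuinely about the given complex number $q$ rather than about $q$ as an indeterminate.
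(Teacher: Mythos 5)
There is a genuine gap, and it sits exactly at the point you dismiss as ``a secondary technical point.'' Your argument shows, for infinitely many $n$, that $V_n(x)$ is a \emph{not-identically-zero polynomial in an indeterminate $q$}. But the lemma asserts non-vanishing at a \emph{given} complex number $q$ (and this is how it is used: in the proof of the theorems $q=\rho/\sigma$ is a fixed rational). Excluding ``finitely many bad $q$ per $n$'' does not help: the exceptional set depends on $n$ and its union over all $n$ is an infinite (a priori dense) set, which could perfectly well contain the given $q$ for every large $n$. Nothing in Propositions~\ref{IRP1} and \ref{IRP2} rules out that the fixed $q$ is a root of $V_n(x)$ for all $n\ge N$; so the conclusion ``$V_n(x)\ne0$ infinitely often'' for the specific $q$ does not follow. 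A second, independent problem occurs in your degenerate case $\lambda\ne0$, $x=\lambda/(\lambda+\alpha)$: the odd-$n$ leading coefficient in Proposition~\ref{IRP1} is $\alpha^{n(n-1)/2}\lambda^{(n-1)^2/4}(x-1)\bigl(\lambda-(\lambda+\alpha)x\bigr)^{(n-1)/2}$, which contains the \emph{same} factor $\bigl(\lambda-(\lambda+\alpha)x\bigr)$ to a positive power for all odd $n\ge3$; hence it vanishes as well, the fallback to odd $n$ fails, and the propositions then say nothing beyond $\ord_qV_n(x)>e_0(n)$ --- they do not even guarantee $V_n(x)\not\equiv0$ as a polynomial in $q$.

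The paper's proof is structurally different and avoids both issues: it forms the generating series $G(z)=\sum_{n\ge0}v_n(x)z^n$ with the \emph{actual} complex numbers $v_n(x)$, derives the functional equation $(1+\lambda z)G(z)-qzG(qz)=x-1/(1-\alpha z)$ as in \eqref{e53}, shows by chasing poles that this equation has no rational-function solution unless $\alpha\in-\lambda q^{\mathbb Z_{>0}}$ (this is where the hypothesis on $\alpha$ enters), and then invokes Kronecker's criterion: if all Hankel determinants $V_n(x)$ vanished from some index on, $G(z)$ would be rational. Because this argument works directly with the given $q$ rather than with $q$ as an indeterminate, it yields the statement for the specific complex parameters, which is what the application requires. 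If you want to repair your approach you would need some mechanism of this kind; the leading-coefficient computations alone cannot do it.
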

\begin{proof}
Writing the relation~\eqref{e52} for the generating series
$$
G(z)=G_x(z)=\sum_{n=0}^\infty v_n(x)z^n
$$
we arrive at
\begin{align*}
G(z)
&=v_0(x)+z\bigl(qG(qz)-\lambda G(z)\bigr)-\sum_{n=1}^\infty\alpha^nz^n
\\
&=z\bigl(qG(qz)-\lambda G(z)\bigr)+x-\frac1{1-\alpha z}.
\end{align*}
Equivalently,
\begin{equation}
(1+\lambda z)G(z)-qzG(qz)
=x-\frac1{1-\alpha z}.
\label{e53}
\end{equation}
We claim that this equation does not have a rational function solution
unless $\alpha\in-\lambda q^{\mathbb Z_{>0}}$. Indeed, if
$z=1/\beta$, $z=1/(q\beta)$, \dots, $z=1/(q^{k-1}\beta)$ are
poles of $G(z)$, for some~$k$, then
$z=1/(q\beta)$, $z=1/(q^2\beta)$, \dots, $z=1/(q^{k}\beta)$ are
poles of $G(qz)$. Hence,
the only way that this is possible in \eqref{e53} is that the factor
$1+\la z$ cancels the pole $z=1/\beta$ of $G(z)$, while the term $-1/(1-\al z)$
on the right-hand side cancels the pole $z=1/(q^k\beta)$ of~$G(qz)$.

By a result of Kronecker (see \cite[pp.~566--567]{Kro} or
\cite[Division~7, Problem~24]{PS}), the fact that
the series $G(z)$ is not a rational function of~$z$ implies
that infinitely many terms of the sequence $V_n(x)$,
where $n=1,2,\dots$, do not vanish.
\end{proof}

\begin{lemma} \label{lem:conj}
Let $\bar\mu,\bar\al,\bar\la,q$ be complex numbers with $\bar\al\ne0$
and $\vert q\vert>1$.
Define the sequence $(\bar v_n)_{n\ge0}$ by \eqref{e52} with
$x=\bar\mu$, $\alpha$ replaced by $\bar\al$, and $\la$ replaced by
$\bar\la$, and let $\bar V_n=\det_{0\le i,j\le n-1}(\bar v_{i+j})$
be the corresponding Hankel determinant.
Then we have
$$
\vert \bar V_n\vert\le \vert q\vert^{\frac {2} {3}n^3+o(n^3)},
\quad \quad \text {as }n\to\infty.
$$
\end{lemma}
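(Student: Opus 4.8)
The plan is to obtain a crude upper bound for the Hankel determinant $\bar V_n$ by a direct entry-wise estimate, paralleling the strategy of Proposition~\ref{IRP3} but without the row-reduction by the operators $\tcD_i$. First I would express each $\bar v_n$ via the explicit formula~\eqref{e09} specialized to $b_j(q)=q^j-\bar\la$, namely
\begin{equation*}
\bar v_n=\bar\mu\prod_{j=1}^n(q^j-\bar\la)-\sum_{k=0}^n\bar\al^k\prod_{j=k+1}^n(q^j-\bar\la).
\end{equation*}
Since $|q|>1$, each product $\prod_{j=1}^n(q^j-\bar\la)$ has modulus at most $\prod_{j=1}^n(|q|^j+|\bar\la|)\le |q|^{n(n+1)/2}C^{n}$ for a suitable constant $C=C(\bar\la)$, and the subtracted sum is dominated by the same kind of bound (the dominant term being $k=0$). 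Hence there is a real constant $C_0>1$, independent of $n$, with $|\bar v_n|\le |q|^{n(n+1)/2}C_0^{\,n+1}$.

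Next I would insert this bound into the Leibniz expansion of the determinant. Writing $\mathfrak S_n$ for the symmetric group on $\{0,1,\dots,n-1\}$, we have
\begin{align*}
|\bar V_n|
&\le n!\max_{\tau\in\mathfrak S_n}\prod_{i=0}^{n-1}|\bar v_{i+\tau(i)}|\\
&\le n!\max_{\tau\in\mathfrak S_n}\prod_{i=0}^{n-1}|q|^{(i+\tau(i))(i+\tau(i)+1)/2}C_0^{\,i+\tau(i)+1}.
\end{align*}
The factor $n!\,C_0^{\sum_i(i+\tau(i)+1)}$ contributes only $\exp(O(n^2))=|q|^{O(n^2)}$, so it can be absorbed into the $o(n^3)$ error term. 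It remains to bound the exponent of $|q|$, namely $\tfrac12\sum_{i=0}^{n-1}(i+\tau(i))(i+\tau(i)+1)$, uniformly over permutations $\tau$.

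The main step, then, is the purely combinatorial maximization of $\sum_{i=0}^{n-1}(i+\tau(i))^2$ over $\tau\in\mathfrak S_n$, since the linear part again contributes only $O(n^2)$. I expect this to be the crux: one must check that no permutation pushes the quadratic sum above $\tfrac43 n^3+O(n^2)$. Since $i+\tau(i)$ ranges between $0$ and $2n-2$ and $\sum_i(i+\tau(i))=n(n-1)$ is fixed, the sum of squares is maximized by making the values as spread out as possible; the identity permutation $\tau(i)=i$ already gives $\sum_i(2i)^2=4\sum_i i^2\sim\tfrac43 n^3$, and a convexity/rearrangement argument shows this is essentially extremal, so $\sum_i(i+\tau(i))^2\le\tfrac43 n^3+O(n^2)$ for every $\tau$. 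Feeding this into the exponent gives $\tfrac12\cdot\tfrac43 n^3=\tfrac23 n^3$, whence $|\bar V_n|\le|q|^{\frac23 n^3+o(n^3)}$, as claimed. Note that no cancellation in the determinant is needed — this is a deliberately wasteful bound, which is why it is weaker than the $|q|^{-n^3/3}$ of Proposition~\ref{IRP3}; here the entries are \emph{large} rather than small because $\bar\mu,\bar\al,\bar\la$ are arbitrary (in the application they will be the conjugates of $\mu,\alpha,\lambda$, so $\bar v_n$ need not be a small tail), and the only goal is a clean cubic exponent.
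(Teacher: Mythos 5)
Your proof is correct and takes essentially the same route as the paper's: an entrywise bound $|\bar v_n|\le|q|^{n^2/2+o(n^2)}$ obtained from the explicit formula \eqref{e09}, followed by the Leibniz expansion and the rearrangement-inequality observation that $\sum_{i}(i+\tau(i))^2\le\sum_i(2i)^2\sim\tfrac43n^3$ is attained at the identity permutation. No gaps.
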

\begin{proof}
{}From \eqref{e09} with $\al$ replaced by $\bar\al$,
$\mu$ replaced by $\bar\mu$, and with $b_j(q)=q^j-\bar\la$,
we see that
$$
\vert\bar v_n\vert\le \vert q\vert ^{\frac {1} {2}n^2+o(n^2)}.$$
Hence, we have
\begin{align*}
\vert \bar V_n\vert&\le n!\max_{\tau\in \mathfrak S_n}
\prod _{i=0} ^{n-1}\vert  \bar v_{i+\tau(i)}\vert
\le n!\max_{\tau\in \mathfrak S_n}\prod _{i=0} ^{n-1}
{\vert q\vert^{   \frac {(i+\tau(i))^2}2 +o((i+\tau(i))^2)}}\\
&\le n!
\prod _{i=0} ^{n-1}
{\vert q\vert^{   \frac {(2i)^2}2 +o(n^2)}}
\le {\vert q\vert^{ \frac {2} {3}n^3+o(n^3) }},
\end{align*}
as desired.
\end{proof}

We are now finally in the position to prove Theorems~\textup{\ref{t1}}
and~\textup{\ref{t2}}. Our proof simplifies the $p$-adic approach of
B\'ezivin~\cite{Be} and Choulet~\cite{Ch}.

\begin{proof}[Proof of Theorems~\textup{\ref{t1}} and~\textup{\ref{t2}}]
Let $q = \rho/\sigma \in \mathbb{Q}$, $\left\vert q \right\vert > 1$ and $\rho > 1$.
Furthermore, let $\gamma = (\log \rho)/(\log |\sigma|)$
($\gamma = \infty$ if $q \in \mathbb{Z}$). Let us now assume that all the numbers
$\alpha$, $\lambda$ and $\mu = F_q(\alpha; \lambda)$ are algebraic and write
$K = \mathbb{Q}(\alpha, \lambda, \mu)$, and $d = [K:\mathbb{Q}]$.

In considering $V_n$ we write, as before in~\eqref{e31},
$V_n=\Delta_n\wt V_n$ and note that, as $n\to\infty$, we have
\begin{gather}
|V_n|\le|q|^{-An^3+o(n^3)},
\label{eq:Vn}
\\
\label{eq:Deltan}
\deg_q\Delta_n(q)=Bn^3+o(n^3), \quad \quad
|\Delta_n(q)|=\vert q\vert^{Bn^3+o(n^3)},
\end{gather}
where
\begin{equation}
\begin{aligned}
A=\frac12, \quad B=\frac{65}{216}-\frac{\Im\Li_2(e^{2\pi
\sqrt{-1}/3})}{\pi^2\sqrt3}
\qquad &\text{if $\lambda=0$}, \\
A=\frac13, \quad B=\frac7{27}-\frac{\Im\Li_2(e^{2\pi
\sqrt{-1}/3})}{\pi^2\sqrt3}
\qquad &\text{if $\lambda\ne0$},
\end{aligned}
\label{e40}
\end{equation}
by Propositions~\ref{IRP3} and \ref{IRP5}.
On the other hand, by Lemma~\ref{lem:conj},
for all $K$-conjugates $V^{[i]}_n$ of $V_n$ we have
$$
|V^{[i]}_n|\le |q|^{Cn^3+o(n^3)},\quad \quad i=1,2,\dots,d,
$$
where $C=2/3$. (Of course, for $i=1$, that is, the case where
$V^{[i]}_n=V_n$, we have a better estimate in \eqref{eq:Vn}.)
Clearly, $\Delta_n(q)$ remains invariant under conjugation, whence,
by \eqref{eq:Vn} and \eqref{eq:Deltan}, we have
\begin{equation} \label{eq:V1n}
\vert\wt V_n\vert=\vert\wt V^{[1]}_n\vert\le
\vert q\vert ^{-(A+B)n^3+o(n^3)}
\quad \quad \text {as }n\to\infty,
\end{equation}
and, for $i=2,3,\dots,d$,
\begin{equation} \label{eq:Vin}
\vert\wt V^{[i]}_n\vert\le \vert q\vert ^{(C-B)n^3+o(n^3)}
\quad \quad \text {as }n\to\infty.
\end{equation}

We know that $V_n$ is a polynomial in $q$, $\alpha$, $\lambda$ and
$\mu$ with integer coefficients, hence also $\wt V_n$.
Since the degree of $V_n$ in each of $\mu$, $\la$, $\al$ is at most
$n^2$ (see the paragraph containing \eqref{e10}),
the same is also true for $\wt V_n$. On the other hand, by
\eqref{e10}, we know that
the degree in $q$ of $V_n$ is at most $2n^3/3+o(n^3)$, whence we are able
to find
a positive integer $\Omega(n)$, $\log \Omega(n) = o(n^3)$, such that
\begin{equation} \label{eq:ZK}
\sigma^{(C-B)n^3}\Omega(n)\wt V_n\in\mathbb Z_K,
\end{equation}
where $\mathbb Z_K$ denotes
the ring of integers of $K$. If $V_n\ne0$, then the product of all
$K$-conjugates of the $K$-integer in \eqref{eq:ZK} is a non-zero
integer. Therefore, using \eqref{eq:V1n} and \eqref{eq:Vin},
\begin{align*}
1
&\le \bigg\vert\prod _{i=1} ^{d}\sigma^{(C-B)n^3}\Omega(n)\wt V^{[i]}_n
\bigg\vert\\
&\le|\sigma|^{(C-B)dn^3}\exp(o(n^3))\,\vert \wt V_n\vert
\prod_{i=2}^d|\wt{V}^{[i]}_n|
\\
&\le|\sigma|^{(C-B)dn^3}|q|^{-(A+B)n^3+(C-B)(d-1)n^3}\exp(o(n^3))
\\
&\le|\sigma|^{(A+C)n^3}\rho^{-(A+C-d(C-B))n^3+o(n^3)}
\\
&\le\rho^{\{(A+C)/\gamma-(A+C-d(C-B))\}n^3+o(n^3)}.
\end{align*}
If
\begin{equation}
\frac1\gamma(A+C)-\bigl(A+C-d(C-B)\bigr)<0,
\label{e38}
\end{equation}
then the above inequality implies that $V_n=0$ for all large $n$,
contradicting Lemma~\ref{lem:Kron}.
The reader should note that \eqref{e38} can only hold if
\begin{equation}
A+C-d(C-B)>0
\quad\text{or, equivalently,}\quad
d<\frac{A+C}{C-B},
\label{e37b}
\end{equation}
in which case, we have
\begin{equation} \label{eq:gamm}
\gamma>\frac{A+C}{A+C-d(C-B)}.
\end{equation}

{}From~\eqref{e40} it follows that the only values of the degree $d=[K:\mathbb{Q}]$
satisfying~\eqref{e37b} are $d=1$ and $d=2$. Theorems~\ref{t1} and \ref{t2}
follow then from~\eqref{eq:gamm} with $d=2$ (in Theorem~\ref{t1})
and $d=1$ (Theorem~\ref{t2}) by using the values of $A$ and $B$
from~\eqref{e40}, and $C=2/3$.
\end{proof}

\begin{remark}
\label{rem8}
We have a strong feeling that the method used in this work potentially
makes it possible to deduce irrationality measures for the values
of~$F_q(\alpha;\lambda)$ in the cases when the number in question is
irrational by Theorem~\ref{t2}. The only problem, which we are not able
to overcome, is to establish the required density
of non-vanishing of the determinant $V_n(x)$
in~\eqref{e51} for a given~$x$. More precisely, in our proof of
Theorems~\ref{t1} and~\ref{t2} we use the fact
(see Lemma~\ref{lem:Kron}) that
$V_n(x)\ne0$ infinitely often, and this is
(more than) sufficient for a quantitative
irrationality, respectively, non-quadraticity result.
We expect that a stronger assertion is true, which would then indeed yield
irrationality measures for values of~$F_q(\alpha;\lambda)$.
Namely, for a given $x\in\mathbb C$ and the sequence $v_n(x)$
defined in~\eqref{e52}, there should exist two positive constants $c_1$ and $c_2$, $c_1<c_2$,
such that for any $m\ge1$ one can find an index~$n$ in the range $c_1m<n<c_2m$,
for which the Hankel determinant $V_n(x)$ in~\eqref{e51} does not vanish.
(In fact, we need this statement only for rational values of~$x$,
but this does not seem to be easier than the general case.) The belief
in this statement rests upon the fact that the sequence $v_n(x)$
is `highly structured' (for instance, it is a solution of
the simple recurrence relation~\eqref{e08} or~\eqref{e52}
with general~$x$; cf.~\cite{Be} and~\cite{Ch});
hence $V_n(x)$~should admit a certain structure as well.
In fact, it was pointed out by the anonymous referee that
in the case $\lambda = 0$ of the Tschakaloff function,
$V_n = V_n(\mu)$~is nonzero for all~$n$ if $q > 1$ and $\alpha > 0$.
This follows from Lemma~2.2 in~\cite{Be}, which provides
in this case the expression
$$
V_n=\alpha^{n^2-n}\sum_{1 \leq j_1 < \dots < j_n}
w_{j_1} \dotsb w_{j_n} \Bigl(\frac{\alpha}{q} \Bigr)^{j_1 + \dots + j_n}
\bigl( V(s_{j_1}, \dots, s_{j_n}) \bigr)^2,
$$
where $w_j = q^{-j(j+1)/2}$, $s_j = q^{-j}$, and $V(s_{j_1}, \dots, s_{j_n})$ is
the Vandermonde determinant built on $s_{j_1}, \dots, s_{j_n}$.
The proof of this result is based on the tail expression~\eqref{e23}
of $v_n = v_n(\mu)$ and does not work for $V_n(x)$ with $x$~general.
This fact clearly supports our expectations above, although it is not enough
for irrationality measures.
\end{remark}

\section*{Acknowledgments}
The authors are indebted to the anonymous referee for pointing out some subtle
insufficiencies in an earlier version of the paper, and for several
helpful comments concerning the discussion in Remark~\ref{rem8}.

\end{document}